\theoremstyle{plain}
\newtheorem{Conjec}{Conjecture}
\newtheorem{lema}{Lemma}
\newtheorem{teor}{Theorem}
\def\A{\alpha}
\def\C{\mathbb{C}}
\title{Identities for the $ k$--generalized Fibonacci sequence with negative indices and its zero--multiplicity}
\author{Jonathan Garc\'ia \footnote{Departamento de Matem\'aticas, Universidad del Valle, 25360 Cali, Calle 13 No 100-00, Colombia. 
e-mail: garcia.jonathan@correounivalle.edu.co}\,\,\,\,
Carlos A. G\'omez \footnote{Departamento de Matem\'aticas, Universidad del Valle, 25360 Cali, Calle 13 No 100-00, Colombia; Research Group ALTENUA--Universidad del Valle: Álgebra, Teoría de Números y Aplicaciones, Colciencias code: COL0017217.
e-mail: carlos.a.gomez@correounivalle.edu.co} \,\,\,\, Florian Luca \footnote{School of Mathematics, University of the Witwatersrand, Johannesburg, South Africa; Research Group in Algebraic Structures and Applications, King Abdulaziz University, Jeddah, Saudi Arabia; Max Planck Institute for Software Systems Saarbr\"ucken, Germany and Centro de Ciencias Matem\'aticas UNAM, Morelia, Mexico.  
e-mail: florian.luca@wits.ac.za}}
\begin{document}

\maketitle

%%%%%%%%%%%%%%%%%%%%%%%%%%%%%%%%%%%%%%%%%%%%%%%%%%%%%%%%%%%%%%%%%%%%%%%%%%%%%%%%%%
\begin{abstract}
%%%%%%%%%%%%%%%%%%%%%%%%%%%%%%%%%%%%%%%%%%%%%%%%%%%%%%%%%%%%%%%%%%%%%%%%%%%%%%%%%%
In this paper, we prove identities for members of the $k$--generalized Fibonacci sequence with 
negative indices and we apply these identities to deduce an exact formula for 
its zero--multiplicity.

\end{abstract}

\emph{Key words and phrases:} $k$--generalized Fibonacci sequence, zero--multiplicity of linear recurrences.

\emph{Mathematics Subject Classification 2020}: 11B39, 11J86

%%%%%%%%%%%%%%%%%%%%%%%%%%%%%%%%%%%%%%%%%%%%%%%%%%%%%%%%%%%%%%%%%%%%%%%%%%%%%%%%%%
\section{Introduction}
%%%%%%%%%%%%%%%%%%%%%%%%%%%%%%%%%%%%%%%%%%%%%%%%%%%%%%%%%%%%%%%%%%%%%%%%%%%%%%%%%%

One of the most famous and curious numerical sequence, due to the large number of properties and relationships with other areas \cite{Koshy}, is the Fibonacci sequence, denoted by ${\bf F}:=(F_n)_{n\geq 0}$. Its initial values are $F_0=0$, $F_1=1$ and obeys the recurrence $F_{n}=F_{n-1}+F_{n-2}$ for all $n\geq 2$. This sequence has been generalized in many ways, some by preserving the initial conditions, and others by preserving the recurrence relation. In general, a sequence $(u_n)_{n\in {\mathbb Z}} \subseteq \mathbb{C} $ is a linear recurrence sequence of order  $ k\in\mathbb{Z}^{+} $ if it satisfies the recurrence relation $u_{n+k}=a_{1}u_{n+k-1}+a_{2}u_{n+k-2}+\cdots+a_{k}u_{n}$ for all $n\geq 0$ with coefficients $ a_1,\ldots,a_k\in \mathbb{C} $ and $a_k\ne 0$. We assume that $k$ is minimal with the above property. Such a sequence $(u_{n})_{n\in {\mathbb Z}} $ has an associated characteristic polynomial given by
$\Psi_{k}(z) = z^{k} -a_{1}z^{k-1}-\cdots-a_{k}$. Let $\alpha_1, \ldots, \alpha_s \in \C$ be the distinct roots of $\Psi_{k}(z)$. From the theory of linear recurrence sequences (see~\cite[Theorem C.1]{T.N.Shorey2008}) there are polynomials $p_1(z), p_2(z), \ldots, p_s(z)$ in ${\mathbb C}[z]$, uniquely determined by the initial values $u_0, \ldots, u_{k-1}$, such that the formula
$u_n = p_1(n)\alpha_1^{n} + p_2(n)\alpha_2^{n} + \cdots + p_s(n)\alpha_s^n$  holds for all  $n\in {\mathbb Z}$. Studying certain arithmetic properties of members of linear recurrence sequences $(u_n)$ can be performed 
with methods from the theory of exponential Diophantine equations by using the above expression for $u_n$.

In this paper, we present some identities for the members of the {\it $k$--generalized Fibonacci sequence} with negative indices. The $k$--generalized Fibonacci sequence $F^{(k)}$ is the linear recurrence sequence of order $k$ with initial values $F_{-i}^{(k)} = 0$ for $i = 0, 1, \ldots, k-2$ and $F_{1}^{(k)} = 1$, where each subsequent term is the sum of the $k$ previous ones $F_{n}^{(k)} =F_{n-1}^{(k)}+F_{n-2}^{(k)}+\cdots+F_{n-k}^{(k)}$, for all $n,k\ge 2$. This sequence  can be extended 
to all integer indices $n$. Since $F_n^{(k)}>0$ for $n>0$, all zeros of ${\bf F}^{(k)}$ correspond to non--positive indices $n$. We denote $H^{(k)}:=(H_{n}^{(k)})_{n\ge0}$, where $H_n^{(k)}=F_{-n}^{(k)}$. This sequence obeys the recurrence relation
\begin{equation}\label{rr}
H_{n}^{(k)} =H_{n-k}^{(k)}-H_{n-(k-1)}^{(k)}-\cdots-H_{n-1}^{(k)}\quad {\text{\rm for~all}}\quad n\ge k,
\end{equation}
with initial values $ H_{i}^{(k)}=0 $ for $ 0\leq i\leq k-2 $ and $ H_{k-1}^{(k)}=1 $. 

In this paper, we find identities for members of $H^{(k)}$ and apply them to deduce an exact formula for the zero--multiplicity of $H^{(k)}$.   

%%%%%%%%%%%%%%%%%%%%%%%%%%%%%%%%%%%%%%%%%%%%%%%%%%%%%%%%%%%%%%%%%%%%%%%%%%%%%%%%%%
\section{The main results}
%%%%%%%%%%%%%%%%%%%%%%%%%%%%%%%%%%%%%%%%%%%%%%%%%%%%%%%%%%%%%%%%%%%%%%%%%%%%%%%%%%

%%%%%%%%%%%%%%%%%%%%%%%%%%%%%%%%%%%%%%%%%%%%%%%%%%%%%%%%%%%%%%%%%%%%%%%%%%%%%%%%%%
\subsection{Identities for $H^{(k)}$}
%%%%%%%%%%%%%%%%%%%%%%%%%%%%%%%%%%%%%%%%%%%%%%%%%%%%%%%%%%%%%%%%%%%%%%%%%%%%%%%%%%
Our identities are similars to the given by Ferguson \cite{Ferguson66}, Gabai~\cite{Gabai} or Cooper and Howard~\cite{Cooper-Howard} for the $k$--generalized Fibonacci sequence with positive indices.
\begin{teor}\label{MT}
	For all $k \ge 2$, the sequence $H^{(k)}$ satisfies 
	\begin{enumerate}
		\item[$ (i) $] For all \,$ 0\leq m\leq r \leq k-2 $, we have $ H_{mk+r}^{(k)}=0 $.
		\item[$ (ii) $] For all \,$ m\in[1,k-1]$, we have $ H_{mk-1}^{(k)}=2^{m-1}$.
		\item[$ (iii) $]  For all \,$ 0\leq r<m\leq k-1 $,
		\[
		H_{mk+r}^{(k)}=(-1)^{r+1}\left[\binom{m-1}{r}+\binom{m}{r+1}\right]2^{m-r-2}.
		\]
		\item[$ (iv) $] For all $ r\in[-1,k-2] $ and $ m\geq k-1 $,
		\[
		H_{mk+r}^{(k)}=\sum_{i=0}^{l} (-1)^{ik+r+1}\left[\binom{m-i-1}{ik+r}+\binom{m-i}{ik+r+1}\right]2^{m-i(k+1)-r-2},
		\]
		where $ l=m-1 $ if $ k=2 $, and $ l =  \left\lfloor m/(k-1)\right\rfloor $ if $ k>2 $.
	\end{enumerate}
\end{teor}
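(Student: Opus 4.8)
\medskip\noindent\textbf{Proof idea.} The plan is to determine the ordinary generating function of $H^{(k)}$ in closed form and then read off coefficients. First I would put $G(x):=\sum_{n\ge 0}H_n^{(k)}x^n$, multiply the recurrence~\eqref{rr} by $x^n$, and sum over $n\ge k$. The left-hand side contributes $G(x)-x^{k-1}$ (using $H_0^{(k)}=\cdots=H_{k-2}^{(k)}=0$ and $H_{k-1}^{(k)}=1$); the term $H_{n-k}^{(k)}$ contributes $x^{k}G(x)$; and each $H_{n-i}^{(k)}$ with $1\le i\le k-1$ contributes $x^{i}G(x)$, since the low-order coefficients that are omitted all vanish. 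Solving the resulting identity gives
\[
G(x)=\frac{x^{k-1}}{1+x+x^{2}+\cdots+x^{k-1}-x^{k}}=\frac{x^{k-1}(1-x)}{1-2x^{k}+x^{k+1}}
\]
(for $k=2$ this is $x/(1+x-x^{2})$, in agreement with $H_n^{(2)}=(-1)^{n-1}F_n$).

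Next I would expand $1-2x^{k}+x^{k+1}=1-x^{k}(2-x)$ by the geometric series and the binomial theorem:
\[
G(x)=x^{k-1}(1-x)\sum_{n\ge 0}x^{nk}(2-x)^{n}=\sum_{n\ge 0}\sum_{j=0}^{n}(-1)^{j}\binom{n}{j}2^{\,n-j}\left(x^{nk+j+k-1}-x^{nk+j+k}\right).
\]
To obtain $H_{mk+r}^{(k)}$ I would extract the coefficient of $x^{mk+r}$: the exponent $nk+j+k-1$ equals $mk+r$ precisely when $nk+j=(m-1)k+(r+1)$, and $nk+j+k$ equals $mk+r$ precisely when $nk+j=(m-1)k+r$. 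Since $0\le r+1\le k-1$, the non-negative solutions of each equation are $n=m-1-i$, $j=ik+r+1$ (respectively $j=ik+r$) for $i\ge 0$; the constraints $0\le j\le n$ are automatically encoded by the vanishing of the binomial coefficients. Subtracting the two single sums and then using Pascal's rule $\binom{a}{b}+\binom{a}{b+1}=\binom{a+1}{b+1}$ together with $2^{a+1}=2\cdot 2^{a}$ to fold the second sum into the first, I expect to arrive exactly at
\[
H_{mk+r}^{(k)}=\sum_{i\ge 0}(-1)^{ik+r+1}\left[\binom{m-i-1}{ik+r}+\binom{m-i}{ik+r+1}\right]2^{\,m-i(k+1)-r-2},
\]
valid for every $m$ and every $r\in[-1,k-2]$, the case $r=-1$ being the index $mk-1$.

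Part \emph{(iv)} then follows once I verify that the stated cutoff $l$ is admissible, i.e. large enough that no nonzero term is dropped (it suffices that $l\ge\lfloor(m-r-1)/(k+1)\rfloor$) and small enough that no index with a negative binomial top appears (it suffices that $m-1-i\ge 0$ for $i\le l$); both hold for $l=m-1$ when $k=2$ and for $l=\lfloor m/(k-1)\rfloor$ when $k>2$, using the hypothesis $m\ge k-1$. Parts \emph{(i)}--\emph{(iii)} come out as specializations: when $m\le k-1$ every term with $i\ge 1$ vanishes because its two binomial coefficients have top at most $k-2$ and bottom at least $k-1$, so only the $i=0$ term survives; for $r\in[0,k-2]$ this is exactly the expression in \emph{(iii)}, for $r=-1$ it equals $2^{m-1}$, which is \emph{(ii)}, and when moreover $r\ge m$ it is $0$ since $\binom{m-1}{r}$ and $\binom{m}{r+1}$ both vanish, which is \emph{(i)}.

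The step I expect to be the main obstacle is the coefficient extraction: enumerating the solutions of the two Diophantine conditions cleanly, especially near the boundary $r=-1$, making sure that no solution with $n<0$ is accidentally included, and then carrying out the sign and power-of-two bookkeeping so that the result matches the precise shape of \emph{(iv)} with the summation range $l$. As an alternative to generating functions, one could prove \emph{(i)}--\emph{(iii)} first by induction on $m$, computing the block $\{mk,\dots,(m+1)k-1\}$ from the previous block via~\eqref{rr} and Pascal's rule, and then \emph{(iv)} by a nested induction; that route is more elementary, but the inductive verification of \emph{(iv)} is longer because~\eqref{rr} mixes several summands at once.
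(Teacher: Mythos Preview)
Your generating-function approach is correct and is genuinely different from the paper's proof. The paper never writes down $G(x)$; instead it organises the sequence $(H_n^{(k)})$ into a rectangular block $M_0$ and a family of $k\times k$ blocks $M_b$, proves a battery of recursive lemmas (Lemma~\ref{lema0} for the entries of $M_0$, Lemma~\ref{lemaproperty} for generic entries of $M_b$, and the $\psi$-identities of Lemma~\ref{lemaproperpsi}), and then establishes (iii) by induction on $r$ and (iv) by a nested double induction (Lemmas~\ref{lemaitemiv00} and~\ref{lemaitemiv}) that tracks the passage from $M_{b-1}$ to $M_b$. Your route replaces all of this with the single closed form $G(x)=x^{k-1}(1-x)/(1-2x^{k}+x^{k+1})$ and one coefficient extraction; the ``fold'' you describe is exactly $\binom{m-1-i}{ik+r+1}+2\binom{m-1-i}{ik+r}=\binom{m-i}{ik+r+1}+\binom{m-1-i}{ik+r}$ via Pascal, which lands directly on the $\psi$-shaped summand. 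The specialisations to (i)--(iii) that you sketch are correct for the reasons you give.

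What each approach buys: yours is dramatically shorter and makes the structure of the formula transparent (the factor $1-2x^{k}+x^{k+1}$ explains simultaneously the identity~\eqref{identiti2H} and the $(k+1)$-periodic staircase of zeros), and the cutoff check reduces to the one-line estimate $\lfloor(m-r-1)/(k+1)\rfloor\le\lfloor m/(k-1)\rfloor\le m-1$ for $m\ge k-1$. The paper's inductive route is more elementary in the sense that it uses nothing beyond the recurrence and binomial identities, and its block decomposition is reused later when analysing ${\mathcal Z}(H^{(k)})$; but for Theorem~\ref{MT} itself your argument is the cleaner one. The only place to be careful in writing it up is the boundary $r=-1$ in the second Diophantine condition $nk+j=(m-1)k+r$: there the $i=0$ solution has $j=-1$, which is harmless because $\binom{n}{-1}=0$, so your uniform parametrisation $j=ik+r$, $n=m-1-i$ still gives the right answer.
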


%%%%%%%%%%%%%%%%%%%%%%%%%%%%%%%%%%%%%%%%%%%%%%%%%%%%%%%%%%%%%%%%%%%%%%%%%%%%%%%%%%
\subsection{Zero--multiplicity for $H^{(k)}$}
%%%%%%%%%%%%%%%%%%%%%%%%%%%%%%%%%%%%%%%%%%%%%%%%%%%%%%%%%%%%%%%%%%%%%%%%%%%%%%%%%%

One of the classical problems in the theory of linear recurrence sequences is the {\it Skolem problem}: Given the linear recurrence sequence ${\bf u}:=(u_n)_{n\in {\mathbb Z}}$, one want to find
${\mathcal Z}({\bf u})=\{n\in {\mathbb Z}: u_n=0\}$. The cardinality of ${\mathcal Z}({\bf u})$ (when this is finite) is called the {\it zero--multiplicity} of ${\bf u}$. There is no known algorithm to find ${\mathcal Z}({\bf u})$ in general. One of the more important results here is due to Skolem~\cite{Skolem, T.N.Shorey2008}: If the coefficients $a_1,\ldots,a_k$ of the linear recurrence sequence ${\bf u}$ are rational, then the set ${\mathcal Z}({\bf u})$ is a union of finitely many arithmetical progressions together with a finite set. Hagedorn~\cite{Hagedorn} showed that if the roots of $\Psi_k(z)$ 
are real then $\#{\mathcal Z}({\bf u})\le 2k-3$.

Our current research is motived by our previous work \cite{zero-multiplicity}. There we obtained that
\begin{equation}\label{zeromultcont}
\mathcal{A}:=\bigcup_{m=0}^{k-2} [m(k+1),(m+1)k-2] \subseteq \mathcal{Z}({H^{(k)}}).
\end{equation}
Thus, $\#{\mathcal Z}({ H^{(k)}}) \ge k(k-1)/2$. Furthermore, we checked that 
$$
\#{\mathcal Z}({H^{(2)}}) = 1,~ \#{\mathcal Z}({H^{(3)}}) = 4 \quad {\rm and} \quad \#{\mathcal Z}({H^{(k)}}) = k(k-1)/2,
$$ 
for $k \in [4, 500]$.

Based on the above results, we proposed in \cite{zero-multiplicity} the following conjecture.
\begin{Conjec}\label{conjectute}
The zero--multiplicity $\#{\mathcal Z}({H^{(k)}})$ of the $k$--generalized Fibonacci sequence with non-positive indices $H^{(k)}$ for $ k\geq4 $ is the $(k-1)$st triangular number; i.e. 
\begin{equation*}
\#{\mathcal Z}({H^{(k)}})= k(k-1)/2.
\end{equation*}
\end{Conjec}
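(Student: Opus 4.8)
The strategy is to prove $\mathcal{Z}(H^{(k)})=\mathcal{A}$, with $\mathcal{A}$ as in \eqref{zeromultcont}. Since the inclusion $\mathcal{A}\subseteq\mathcal{Z}(H^{(k)})$ and the count $\#\mathcal{A}=k(k-1)/2$ are already in hand, it suffices to show $H_n^{(k)}\neq0$ for every $n\geq0$ with $n\notin\mathcal{A}$. Write $n=mk+r$ with $m\geq0$ and $-1\leq r\leq k-2$; an elementary check gives $n\in\mathcal{A}$ iff $0\leq m\leq k-2$ and $m\leq r\leq k-2$, so (for $n\geq0$, recalling $r\le k-2$) one has $n\notin\mathcal{A}$ iff $r<m$. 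If $m\leq k-1$ the conclusion is immediate from the explicit formulas: when $r=-1$, Theorem~\ref{MT}(ii) gives $H_{mk-1}^{(k)}=2^{m-1}\neq0$; when $0\leq r<m\leq k-1$, Theorem~\ref{MT}(iii) gives $H_{mk+r}^{(k)}=(-1)^{r+1}\bigl[\binom{m-1}{r}+\binom{m}{r+1}\bigr]2^{m-r-2}$, whose bracket is $\geq\binom{m}{r+1}\geq1$ because $r+1\leq m$.

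So assume $m\geq k-1$ (then $r<m$ automatically, i.e.\ $n\notin\mathcal{A}$). Here I would use Theorem~\ref{MT}(iv) in the form
\[
H_{mk+r}^{(k)}=(-1)^{r+1}\sum_{i=0}^{l}(-1)^{ik}\,B_i\,2^{\,m-i(k+1)-r-2},\qquad B_i:=\binom{m-i-1}{ik+r}+\binom{m-i}{ik+r+1}\geq0,
\]
noting that $B_i\neq0$ exactly for $0\leq i\leq l':=\lfloor(m-r-1)/(k+1)\rfloor$. If $k$ is \emph{even} then $(-1)^{ik}=1$ for every $i$, so $H_{mk+r}^{(k)}$ equals $(-1)^{r+1}$ times a sum of non-negative integers whose $i=0$ term $B_0\,2^{m-r-2}$ is positive (because $B_0\geq\binom{m}{r+1}\geq1$); hence $H_{mk+r}^{(k)}\neq0$ at once.

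The substantial case is $k$ \emph{odd}, where $(-1)^{ik}=(-1)^i$ and $H_{mk+r}^{(k)}=(-1)^{r+1}\sum_{i=0}^{l'}(-1)^iB_i2^{e_i}$, $e_i:=m-i(k+1)-r-2$, is a genuine alternating sum whose largest summands lie in the middle of the range (the $B_i$ rise to a central peak and then fall). My plan is a $2$-adic argument: after pulling out the common factor $2^{e_{l'}}$ it suffices that $\Sigma:=\sum_{i=0}^{l'}(-1)^iB_i2^{(l'-i)(k+1)}$ be nonzero, and for this I would prove that $v(i):=\nu_2(B_i)+(l'-i)(k+1)$ attains its minimum at a \emph{unique} index $i$, whence $\nu_2(\Sigma)=\min_i v(i)<\infty$. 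Concretely, taking complements one writes $B_i=\binom{N_i-1}{\delta_i}+\binom{N_i}{\delta_i}=\tfrac{2N_i-\delta_i}{N_i}\binom{N_i}{\delta_i}$ with $N_i=m-i$ and $\delta_i=m-r-1-i(k+1)$ (so $\delta_{l'}\in[0,k]$ and $\delta_i=\delta_{l'}+(l'-i)(k+1)$), evaluates $\nu_2(B_i)$ via Kummer's theorem, and bounds the fluctuation of $\nu_2(B_i)$ in $i$; since the ``free'' exponent $(l'-i)(k+1)$ drops by $k+1$ at each step while $\nu_2(B_i)$ can fluctuate only by $O(\log m)$, the minimum of $v(i)$ is attained solely at $i=l'$ once $m$ exceeds an explicit bound $M(k)$. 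The finitely many pairs $(m,r)$ with $k-1\leq m\leq M(k)$ — together with the genuinely exceptional value $k=3$, where in fact $\#\mathcal{Z}(H^{(3)})=4>3$ — would be disposed of by hand, conveniently via the contracted recurrence $H_j^{(k)}=2H_{j-k}^{(k)}-H_{j-k-1}^{(k)}$ (valid for all $j$, and itself a clean route to Theorem~\ref{MT}) and the resulting expansion $H_{mk+r}^{(k)}=(-1)^r\sum_{s\geq1}(-1)^s\binom{m}{s+r}2^{m-s-r}F_s^{(k)}$, equivalently the generating identity $H_{mk+r}^{(k)}=(-1)^m\,[v^{m-r-1}]\bigl((1-v)(1-2v)^m/(1-2v+v^{k+1})\bigr)$ in which $F_s^{(k)}>0$: for $m$ below $M(k)$ only boundedly many terms survive and the first one dominates. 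Combining all cases yields $\mathcal{Z}(H^{(k)})=\mathcal{A}$ and hence $\#\mathcal{Z}(H^{(k)})=k(k-1)/2$.

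\textbf{Main obstacle.} The hard part is exactly the odd-$k$ alternating sum of Theorem~\ref{MT}(iv): there is no sign coincidence as in the even case, the summands form a unimodal family with a large central bump, and the $2$-adic valuations $\nu_2(B_i)$ are not monotone (for instance $B_i$ can be a square of a highly $2$-divisible integer), so making the ``$k+1$ beats $\log m$'' heuristic rigorous — proving uniqueness of the minimal-valuation summand uniformly in $k\geq5$, $m$ and $r$, and identifying exactly which initial range of $m$ must be checked by hand — is where the real work lies. This is also the step where the hypothesis $k\geq4$ (indeed $k\geq5$ for $k$ odd) is essential, consistently with the exceptional behaviour at $k=3$.
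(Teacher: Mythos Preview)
Your even-$k$ argument is correct and is exactly the paper's argument. The gap is in the odd-$k$ case, and it is a real one.

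The direction of your $2$-adic step is reversed. With $v(i)=\nu_2(B_i)+(l'-i)(k+1)$, the comparison ``$k+1$ beats the fluctuation of $\nu_2(B_i)$'' says precisely that the step size $k+1$ exceeds $\max_i \nu_2(B_i)=O(\log m)$; this holds when $m$ is \emph{small} relative to $2^{k}$, not when $m>M(k)$. Concretely, at $i=l'$ one has $\delta_{l'}\in[0,k]$, and already for $\delta_{l'}=2$ you get $B_{l'}=(N_{l'}-1)^2$, so $\nu_2(B_{l'})=2\nu_2(N_{l'}-1)$ can exceed $k+1$ as soon as $m\gtrsim 2^{(k+1)/2}$; then $v(l')$ need not be the unique minimum and the argument says nothing. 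What your computation actually yields (and this is exactly what the paper extracts) is the implication: if $H_{mk+r}=0$ with $m\ge k-1$, then $2^{k+1}\mid\psi(m-l'-1,l'k+r)$, whence by Kummer $k+1\le \nu_2(\psi)\le 2\log_2(m-l'-1)+2$, i.e.\ $n\ge 2^{(k-1)/2}$. That is a \emph{lower} bound on a putative extra zero, not a nonvanishing statement for large $m$.

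The paper closes the gap with a transcendental input that your proposal is missing: from the Binet formula for $H_n$ one builds a linear form $\Lambda$ in logarithms of algebraic numbers, bounds it from below by Matveev's theorem, and bounds it from above using a quantitative separation of the roots $|\alpha_{k-2}/\alpha_k|>1+1/(10k^{9.6}(\pi/e)^k)$ (Lemma~\ref{lm |ai|/|aj|}). This gives $n<3\cdot10^{14}k^{17.6}(\log k)^3(\pi/e)^k$, which together with $n\ge 2^{(k-1)/2}$ forces $k\le 790$; one more iteration pushes $k<500$, a range already handled computationally in~\cite{zero-multiplicity}. Without that Baker--Matveev upper bound (or an equivalent device), your plan leaves infinitely many $(k,m,r)$ with $m$ large uncovered, and the suggested ``first term dominates'' heuristic for the alternating sum does not survive the central bulge of the $B_i$.
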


Recently, relating the $2$--adic valuation of $F^{(k)}$ with the Diophantine equation $H_{n}^{(k)} = 0$, Young \cite{Young22} showed that for all $k>500$, 
$$
\#{\mathcal Z}({H^{(k)}}) \le k(k+1)/2 + \lfloor k/2\rfloor. 
$$ 
In this paper, we confirm the above conjecture. As a consequence of this, we get that ${\mathcal Z}({\bf F^{(k)}})$ is exactly $\mathcal{A}$.

%%%%%%%%%%%%%%%%%%%%%%%%%%%%%%%%%%%%%%%%%%%%%%%%%%%%%%%%%%%%%%%%%%%%%%%%%%%%%%%%%%
\section{Preliminary results}
%%%%%%%%%%%%%%%%%%%%%%%%%%%%%%%%%%%%%%%%%%%%%%%%%%%%%%%%%%%%%%%%%%%%%%%%%%%%%%%%%%
To simplify the notation, from now on we denote $ H_{n}^{(k)}:= H_{n} $, where $ k $ is fixed.
From the recurrence relation \eqref{rr}, it is easy to see that  
\begin{align}
	H_{n} &=2H_{n-k}-\left(H_{n-k}+H_{n-(k-1)}+\cdots+H_{n-2}+H_{n-1}\right)\nonumber\\
	&=2H_{n-k}-H_{n-k-1}\quad\text{for all}\quad  n\geq k+1.\label{identiti2H}
\end{align}
The containment~\eqref{zeromultcont} is a direct consequence  of the initial values of $ H_{n} $ and the identity~\eqref{identiti2H}. Indeed, inductively we see that  $ H_{n}=0 $ for all $ 0\leq n\leq k-2 $. Further
\begin{align*}
H_n&=2H_{n-k}-H_{n-k-1}=0 \quad\text{for all}\quad  m(k+1)\leq n\leq mk+(k-2),
\end{align*}
with $1 \le m \le k-2$. Moreover, the length of these intervals is given by the decreasing quantity 
$$
mk+(k-2)-m(k+1)=k-(m+2),
$$
which becomes $0$ at $ m=k-2 $. 

So far, we have only summarized what we obtained in \cite{zero-multiplicity}. This is fundamental for what follows, since we note that the sequence $ H_{n} $ can be represented as lists matrices, starting with a rectangular matrix of size $ (k-1)\times(k+1) $ and continuing with square matrices of size $ k\times k $.
Then, we observed interesting patterns in $ H_{n} $ that we were able to formulate and prove.\\

We start by ordering the first $ k^2-1 $ elements of the sequence $ H_{n} $ in matrix form $\left(H_{(i-1)(k+1)+(j-1)}\right)_{ij} $  with $ 1\leq i\leq k-1 $ and $ 1\leq j\leq k+1  $. This is
\[
	\left(\begin{array}{llllll}
		\bf{H_{0}} & \cdots & \bf{H_{k-2}} & H_{k-1} & H_{k}\\
		\bf{H_{(k+1)+0}} & \cdots & H_{(k+1)+k-2} & H_{(k+1)+k-1} & H_{(k+1)+k}\\
		\bf{H_{2(k+1)+0}} & \cdots & H_{2(k+1)+k-2} & H_{2(k+1)+k-1} & H_{2(k+1)+k}\\
		\quad\vdots & \ddots & \quad\vdots & \quad\vdots & \quad\vdots\\
		\bf{H_{(k-2)(k+1)+0}} & \cdots & H_{(k-2)(k+1)+k-2} & H_{(k-2)(k+1)+k-1} & H_{(k-2)(k+1)+k}
	\end{array}\right)
\]
and we notice that the ``upper triangular'' part, which is in bold, is composed of zeros.
We can also write this matrix as 
\begin{small}
	\begin{equation}\label{matrix2}
		\left(\begin{array}{lllllll}
			0 & 0 & \cdots & 0 & 0 & H_{k-1} & H_{k}\\
			0 & 0 & \cdots & 0 & H_{2k-1} & H_{2k} & H_{2k+1}\\
			0 & 0 & \cdots & H_{3k-1} & H_{3k} & H_{3k+1} & H_{3k+2}\\
			\vdots & \vdots & \ddots & \quad\vdots & \quad\vdots & \quad\vdots & \quad\vdots\\
			0 & H_{(k-1)k-1} & \cdots & H_{(k-1)k+k-5} & H_{(k-1)k+k-4} & H_{(k-1)k+k-3} & H_{(k-1)k+k-2}
		\end{array}\right),
	\end{equation}
\end{small}
and we observe the following behavior in the non-zero diagonals. 
\begin{lema}\label{lema0}
	~
	\begin{enumerate}
		\item[$ (a) $] For all $ m\in[1,k-1] $, we have $ H_{mk-1}=2^{m-1}$.
		\item[$ (b) $] For all $ m\in[1,k-1] $, we have $ H_{mk}=-(m+1)2^{m-2}$.
		\item[$ (c) $] For all \, $  1\leq r<m\leq k-1  $,  \[H_{mk+r}=-\sum_{j=r}^{m-1} 2^{m-1-j}H_{jk+r-1}.\]
		%\item[$ (d) $] $ H_{rk+r-1}=(-1)^{r} $ for all $ 1\leq r\leq k-2 $.
	\end{enumerate}
\end{lema}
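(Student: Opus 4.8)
\textbf{Proof plan for Lemma \ref{lema0}.}
The plan is to prove all three statements simultaneously by a single induction, exploiting the basic two-term identity \eqref{identiti2H}, namely $H_n = 2H_{n-k} - H_{n-k-1}$ for $n \ge k+1$, together with the known vanishing $H_n = 0$ for $0 \le n \le k-2$ and the recurrence \eqref{rr}. The key observation is that \eqref{identiti2H} links an entry in a given row of the matrix \eqref{matrix2} to entries one row above it (shifting the index by $k+1$ moves down one row in the display, so shifting by $k$ and $k+1$ moves from row $m$ back toward row $m-1$); this is exactly what makes a row-by-row induction on $m$ natural.

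First I would establish the base case $m=1$. For $(a)$, $H_{k-1}=1=2^{0}$ is the initial value. For $(b)$, I would compute $H_{k}$ directly from \eqref{rr}: since $H_0=\cdots=H_{k-2}=0$ and $H_{k-1}=1$, the recurrence at $n=k$ gives $H_k = H_0 - (H_1+\cdots+H_{k-1}) = -1 = -(1+1)2^{-1}$, matching $-(m+1)2^{m-2}$ at $m=1$. Statement $(c)$ is vacuous for $m=1$. Then for the inductive step I would assume $(a)$, $(b)$, $(c)$ hold for all rows up to $m-1$ and prove them for row $m$, where $2 \le m \le k-1$.

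For the inductive step the engine is \eqref{identiti2H}. To get $(a)$ at row $m$: write $H_{mk-1} = 2H_{mk-1-k} - H_{mk-2-k} = 2H_{(m-1)k-1} - H_{(m-1)k-2}$. The first term is $2\cdot 2^{m-2}=2^{m-1}$ by the inductive hypothesis for $(a)$; the second term $H_{(m-1)k-2}$ lies in the zero (bold) part of the matrix when $m \le k-1$ — more precisely $(m-1)k-2 = (m-2)(k+1)+(k-m+1)$ with $0 \le k-m+1 \le k-2$ exactly when $2 \le m \le k-1$ — so it vanishes by part $(i)$ of Theorem \ref{MT} (equivalently by the already-established containment \eqref{zeromultcont}), giving $H_{mk-1}=2^{m-1}$. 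For $(c)$: applying \eqref{identiti2H} repeatedly, or rather telescoping, $H_{mk+r} = 2H_{(m-1)k+r} - H_{(m-1)k+r-1}$; iterating the ``$2H_{n-k}$'' part down to the first row where the diagonal becomes nonzero produces the geometric-type sum $-\sum_{j=r}^{m-1}2^{m-1-j}H_{jk+r-1}$, the boundary term at $j=r$ being $H_{rk+r-1}$ which sits just off the vanishing block. One has to be a little careful that all intermediate $H_{(m-i)k+r-1}$ with $r \ge 1$ that get peeled off are indeed the ``subdiagonal'' entries appearing in the claimed sum and that no spurious zero terms are miscounted; this bookkeeping is the one place where indices must be tracked precisely. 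Finally $(b)$ is the case $r=0$ handled separately (since $r=0$ is excluded from $(c)$): from $H_{mk}=2H_{(m-1)k}-H_{(m-1)k-1}$ and the hypotheses $(b)$, $(a)$ for $m-1$ we get $H_{mk} = 2\bigl(-m\,2^{m-3}\bigr) - 2^{m-2} = -m\,2^{m-2} - 2^{m-2} = -(m+1)2^{m-2}$, as desired.

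The main obstacle I anticipate is not conceptual but combinatorial: verifying in the inductive step for $(c)$ that the indices $(m-i)k+r-1$ peeled off by repeated use of \eqref{identiti2H} fall exactly on the entries $H_{jk+r-1}$ in the stated range $r \le j \le m-1$, and that the iteration terminates at the right boundary (where $H_{rk+r-1}$ sits adjacent to the triangular block of zeros), with no off-by-one error in the powers of $2$. Everything else reduces to the clean two-term identity \eqref{identiti2H} and the previously recorded vanishing pattern \eqref{zeromultcont}.
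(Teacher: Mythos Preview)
Your plan is essentially the paper's own proof: each of $(a)$, $(b)$, $(c)$ is handled by induction on $m$ using the two--term identity \eqref{identiti2H}, with $(a)$ relying on $H_{(m-1)k-2}=0$ from \eqref{zeromultcont}, $(b)$ combining the inductive hypotheses for $(a)$ and $(b)$ at $m-1$, and $(c)$ anchored at $m=r+1$ where $H_{rk+r}=0$. One small slip to fix in your bookkeeping for $(a)$: the decomposition should read $(m-1)k-2=(m-2)(k+1)+(k-m)$, not $+(k-m+1)$, though the conclusion that this index lies in the zero block for $2\le m\le k-1$ is unaffected.
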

\begin{proof} 
	To show that the items of this lemma are fulfilled, we proceed by induction on $ m $, taking into account that the upper bound for $m$ is $ k-1\geq1 $. 
	\begin{enumerate}[leftmargin=*]
		\item[$ (a) $] If $ m=1 $, given the initial values, we have $ H_{k-1}=1=2^{0} $. Suppose that $ H_{(m-1)k-1}=2^{m-2} $ for $ m\in[2,k-1] $. Then, using identity~\eqref{identiti2H}, it follows that 
		\[
			H_{mk-1}=2H_{(m-1)k-1}-H_{(m-1)k-2}=2^{m-1},
		\]
		since $ 0\leq m-2\leq k-3 $ and therefore $ H_{((m-2)+1)k-2}=0 $ (see \eqref{zeromultcont}).
		
		\item[$ (b) $] When $ m=1 $, we have
		\[
			H_{k}=H_{0}-H_{1}-\cdots-H_{k-1}=-1=-2\times2^{-1}.
		\]
		Assume  $ H_{(m-1)k}=-m2^{m-3} $ and we obtain from identity~\eqref{identiti2H} and item $ (a) $, that 
		\[
			H_{mk}=2H_{(m-1)k}-H_{(m-1)k-1}=-m2^{m-2}-2^{m-2}=-(m+1)2^{m-2}.
		\] 
		\item[$ (c) $] Assume $ m=r+1 $. We have by identity~\eqref{identiti2H} that
		\[
			H_{(r+1)k+r}=2H_{rk+r}-H_{rk+r-1}=-H_{rk+r-1}=-\sum_{j=r}^{r} 2^{r-j}H_{jk+r-1},
		\]
		because $ H_{rk+r}=H_{r(k+1)}=0 $ with $ 1\leq r\leq k-2 $ (see \eqref{zeromultcont}). We take 	
		\[
			H_{(m-1)k+r}=-\sum_{j=r}^{m-2} 2^{m-2-j}H_{jk+r-1}
		\]
		as the inductive hypothesis. Then 
		\begin{align*}
				H_{mk+r}&=2H_{(m-1)k+r}-H_{(m-1)k+r-1}\\&=-\left(\sum_{j=r}^{m-2} 2^{m-1-j}H_{jk+r-1}\right)-H_{(m-1)k+r-1}\\
				&=-\sum_{j=r}^{m-1} 2^{m-1-j}H_{jk+r-1}.\qedhere
		\end{align*}
	\end{enumerate}
\end{proof}
Matrix~\eqref{matrix2} is the first element of the list of matrices that we use to organize all the elements of $ H_n $, as we mentioned before. It is rectangular of size $ (k-1)\times(k+1) $. The matrix that follows is of size $ k \times k $ and includes all the non-zero elements of the last row of matrix~\eqref{matrix2}, this is 
\begin{equation}\label{matrix3}
	\left(\begin{array}{llll}
		H_{(k^{2}-k-1)+0} & H_{(k^{2}-k-1)+1} & \cdots & H_{(k^{2}-k-1)+k-1}\\
		H_{(k^{2}-k-1)+k} & H_{(k^{2}-k-1)+k+1} & \cdots & H_{(k^{2}-k-1)+k+(k-1)}\\
		\quad\vdots & \quad\vdots & \ddots & \quad\vdots\\
		H_{(k^{2}-k-1)+(k-1)k} & H_{(k^{2}-k-1)+(k-1)k+1} & \cdots & H_{(k^{2}-k-1)+(k-1)k+(k-1)}
	\end{array}\right).
\end{equation}
In general, all square matrices after matrix~\eqref{matrix2} form the sequence $ \{M_b\}_{b\in\mathbb{Z}^{+}} $, where
\[
	 M_b :=	\left(\begin{array}{llll}
		H_{(bk^{2}-bk-1)+0} & H_{(bk^{2}-bk-1)+1} & \cdots & H_{(bk^{2}-bk-1)+k-1}\\
		H_{(bk^{2}-bk-1)+k} & H_{(bk^{2}-bk-1)+k+1} & \cdots & H_{(bk^{2}-bk-1)+k+(k-1)}\\
		\quad\vdots & \quad\vdots & \ddots & \quad\vdots\\
		H_{(bk^{2}-bk-1)+(k-1)k} & H_{(bk^{2}-bk-1)+(k-1)k+1} & \cdots & H_{(bk^{2}-bk-1)+(k-1)k+(k-1)}
	\end{array}\right).
\]
 Furthermore, by calling $ M_{0} $ the matrix~\eqref{matrix2}, we have arrived at
\[
\left(H_{n}^{(k)}\right)_{n\geq0}=\bigcup_{b\geq0}\left\{h\,:\,h \text{ is an entry of } M_{b}\right\}.
\]
From now on, we simplify the notation by setting
\[
H_{b,jk+r}:=H_{(bk^2-bk-1)+jk+r}\quad\text{for all }~ b\geq0.
\]
Note that matrix~\eqref{matrix3} is exactly $ M_1 $. In fact, these matrices satisfy that their first row is exactly the last row of the immediately preceding matrix. Indeed,
\begin{equation}
	H_{b,r}=H_{b-1,(k-1)k+r}\quad\text{for all } ~0\leq r\leq k-1 \quad\text{and }~ b\geq1.  \label{roweqrow1}
\end{equation}
Also, 
\begin{equation}
	H_{b,-1}=H_{b-1,(k-2)k+(k-1)} \quad\text{ for all }~ b\geq1\label{consecnotationH0}
\end{equation}
and in particular, by containment~\eqref{zeromultcont}, 
\begin{equation*}
	 H_{1,-1}=H_{0,(k-2)k+(k-1)}=H_{(k-2)(k+1)}=0.
\end{equation*}
Moreover, we observe that identity~\eqref{identiti2H} is still satisfied with this notation 
\begin{equation}
	H_{b,n}=2H_{b,n-k}-H_{b,n-k-1} \label{identiti2Hb}
\end{equation}
for all $ b\geq1 $ and $ n\geq2 $, or for $ b=0 $ and $ n\geq k+2 $.\\

 Next, we find the following patterns in all entries below the main diagonal of $ M_{b} $ with $ b\geq1 $. 
\begin{lema}\label{lemaproperty}
	Let $ b\geq1 $, then
	\begin{enumerate}[leftmargin=*]
		\item[$(I)$] $ H_{b,jk}=\begin{cases}
			2H_{b,0}-H_{b-1,(k-2)k+(k-1)}, & \text{ if }~j=1,\\
			2^{j-1}H_{b,k}-\sum_{i=0}^{j-2}2^{j-i-2}H_{b,(i+1)k-1}, & \text{ if }~2\leq j\leq k-1.
		\end{cases} $
		
		\item[$(II)$] For all $r, j\in[1,k-1]$, it holds that
		\[
			H_{b,jk+r}=2^{j-1}H_{b,k+r}-\sum_{i=1}^{j-1}2^{j-1-i}H_{b,ik+r-1}.
		\]
		\item[$(III)$] Let $ r,j\in[0,k-1] $ be fixed. Then
		\[
		H_{b,jk+r}=\sum_{i=0}^{t}(-1)^i \binom{t}{i}2^{t-i} H_{b,(j-t)k+r-i},
		\]
		for all $  t\in \left[0,\left\lfloor \frac{bk^2-bk-1+jk+r}{k+1} \right\rfloor \right]  $.
	\end{enumerate}	
\end{lema}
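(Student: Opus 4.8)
The plan is to derive the entire lemma by iterating the two--term identity \eqref{identiti2Hb}, $H_{b,n}=2H_{b,n-k}-H_{b,n-k-1}$ (equivalently, in absolute indices, $H_{n}=2H_{n-k}-H_{n-k-1}$ for $n\ge k+1$, which is the identity behind \eqref{identiti2H}), supplemented by the boundary relation \eqref{consecnotationH0}. Part $(I)$ for $j=1$ is immediate: apply \eqref{identiti2Hb} once to $H_{b,k}$ to get $H_{b,k}=2H_{b,0}-H_{b,-1}$ and substitute $H_{b,-1}=H_{b-1,(k-2)k+(k-1)}$ from \eqref{consecnotationH0}.

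For the remaining range of $(I)$ and for $(II)$, I would put $a_{j}:=H_{b,jk+r}$ and $c_{j}:=H_{b,jk+r-1}$ (with $r=0$ in the case of $(I)$). For $j\ge 2$ one has $jk+r\ge 2k\ge 4$, so \eqref{identiti2Hb} reads $a_{j}=2a_{j-1}-c_{j-1}$; unrolling this first--order recurrence from $a_{j}$ down to $a_{1}$ gives
\[
a_{j}=2^{j-1}a_{1}-\sum_{i=1}^{j-1}2^{j-1-i}c_{i}.
\]
Taking $a_{1}=H_{b,k+r}$ and $c_{i}=H_{b,ik+r-1}$, this is exactly the statement of $(II)$, and for $(I)$ the same expression turns into the asserted formula after the reindexing $i\mapsto i+1$ in the sum. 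Equivalently one can phrase this as an induction on $j$ whose inductive step is a single application of \eqref{identiti2Hb}, the newly produced trailing term $H_{b,(j-1)k+r-1}$ being absorbed as the $i=j-1$ summand, with the correct coefficient $2^{0}=1$.

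For $(III)$ I would argue by induction on $t$, the case $t=0$ being trivial. Write $n:=bk^{2}-bk-1+jk+r$ for the absolute index of $H_{b,jk+r}$, so that $H_{b,(j-t)k+r-i}=H_{n-tk-i}$. Suppose the formula holds for some $t$ with $t+1\le\lfloor n/(k+1)\rfloor$. The $t+1$ terms at level $t$ have absolute indices $n-tk,n-tk-1,\dots,n-tk-t$, and the smallest obeys $n-tk-t=n-t(k+1)\ge k+1$ precisely because $t+1\le\lfloor n/(k+1)\rfloor$; hence \eqref{identiti2H} is applicable to each of them, replacing $H_{b,(j-t)k+r-i}$ by $2H_{b,(j-t-1)k+r-i}-H_{b,(j-t-1)k+r-i-1}$. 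Separating the resulting double sum according to these two terms, reindexing the second by $i\mapsto i+1$, extending both index ranges to $0\le i\le t+1$ (the spurious coefficients $\binom{t}{t+1}$ and $\binom{t}{-1}$ vanish), and collapsing via Pascal's rule $\binom{t}{i}+\binom{t}{i-1}=\binom{t+1}{i}$, one obtains
\[
H_{b,jk+r}=\sum_{i=0}^{t+1}(-1)^{i}\binom{t+1}{i}2^{(t+1)-i}H_{b,(j-(t+1))k+r-i},
\]
which is the $t+1$ instance and closes the induction.

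The hard part will not be any single computation but the admissibility bookkeeping in $(III)$: one must check that the stated range $t\in[0,\lfloor(bk^{2}-bk-1+jk+r)/(k+1)\rfloor]$ is exactly the set of $t$ for which every intermediate invocation of \eqref{identiti2H} acts on terms of absolute index $\ge k+1$, so that no step is illegitimate. The inequality displayed in the induction step shows that the bound is sharp for precisely this reason; apart from that, the argument is a routine manipulation of geometric and binomial sums.
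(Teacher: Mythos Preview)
Your proposal is correct and follows essentially the same approach as the paper: the $j=1$ case of $(I)$ comes from a single use of \eqref{identiti2Hb} together with \eqref{consecnotationH0}; the $j\ge 2$ case of $(I)$ and all of $(II)$ come from iterating \eqref{identiti2Hb} down to the row $j=1$ (the paper does $(I)$ by induction on $j$ and $(II)$ by writing out the unrolling explicitly, whereas you package both as the same first--order recurrence $a_j=2a_{j-1}-c_{j-1}$, which is a harmless stylistic difference); and $(III)$ is proved by induction on $t$, applying \eqref{identiti2Hb} to every term at level $t$ and combining with Pascal's rule. Your admissibility discussion for $(III)$ is in fact slightly sharper than the paper's: you verify that each intermediate application of \eqref{identiti2H} acts on an absolute index $\ge k+1$, whereas the paper only checks that the final indices are nonnegative, but the two conditions collapse to the same bound $t\le\lfloor n/(k+1)\rfloor$.
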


\begin{proof} We consider each item separately. 
	\begin{enumerate}[leftmargin=*]
		\item[$ (I) $] The case $ j=1 $, follows from identities \eqref{consecnotationH0} and \eqref{identiti2Hb}. Let $ 2\leq j\leq k-1 $. We argue by induction on $ j $. If $ j=2 $, item $ (I) $ is immediate by identity~\eqref{identiti2Hb}. Suppose it is satisfied for $ j-1 $; i.e.,
		\begin{equation}
			H_{b,(j-1)k}=2^{j-2}H_{b,k}-\sum_{i=0}^{j-3}2^{j-i-3}H_{b,(i+1)k-1}.\label{hpinlemaHb1}
		\end{equation}
		Then, by \eqref{identiti2Hb} and \eqref{hpinlemaHb1}, 
		\begin{align*}
			H_{b,jk}&=2H_{b,(j-1)k}-H_{b,(j-1)k-1}\\
			&=\left(2^{j-1}H_{b,k}-\sum_{i=0}^{j-3}2^{j-i-2}H_{b,(i+1)k-1}\right)-H_{b,(j-1)k-1}\\
			&=2^{j-1}H_{b,k}-\sum_{i=0}^{j-2}2^{j-i-2}H_{b,(i+1)k-1},
		\end{align*}
		and thus we conclude item $ (I) $ for $j$.
		
		\item[$ (II) $] If $ j=1 $, the identity is trivial. For $ j\geq2 $, we apply recursively, $ j-1 $ times, identity~\eqref{identiti2Hb} on $ H_{b,jk+r} $. That is,
		\begin{align*}
			H_{b,jk+r}&=2H_{b,(j-1)k+r}-H_{b,(j-1)k+r-1}\\
			&=2^2H_{b,(j-2)k+r}-2H_{b,(j-2)k+r-1}-H_{b,(j-1)k+r-1}\\
			&=2^3H_{b,(j-3)k+r}-2^2H_{b,(j-3)k+r-1}-2H_{b,(j-2)k+r-1}-H_{b,(j-1)k+r-1}\\
			&\;\;\vdots\\
			&=2^{j-1}H_{b,(j-(j-1))k+r}-2^{j-2}H_{b,(j-(j-1))k+r-1}-\cdots-H_{b,(j-1)k+r-1}.
		\end{align*}
	
		\item[$ (III) $] We proceed by induction on $ t $. If $ t=0 $, the identity is trivial. Suppose it is satisfied for $ t-1\geq 0 $; i.e., 
		\[
		H_{b,jk+r}=\sum_{i=0}^{t-1}(-1)^i \binom{t-1}{i}2^{t-i-1} H_{b,(j-t+1)k+r-i}.
		\]
		Then, by the inductive hypothesis and identity~\eqref{identiti2Hb}, it follows that 
\begin{align*}
			&H_{b,jk+r}=\sum_{i=0}^{t-1}(-1)^i \binom{t-1}{i}2^{t-i-1} \left(2H_{b,(j-t)k+r-i}-H_{b,(j-t)k+r-i-1}\right)\\
			&=(-1)^0 \binom{t-1}{0}2^{t}H_{b,(j-t)k+r}+ \sum_{i=1}^{t-1}(-1)^i \binom{t-1}{i}2^{t-i}H_{b,(j-t)k+r-i}\\
			&\qquad\qquad\qquad\qquad\qquad\qquad~~+\sum_{i=1}^{t}(-1)^{i} \binom{t-1}{i-1}2^{t-i}H_{b,(j-t)k+r-i}
\end{align*}
\begin{align*}			
			&= 2^{t}H_{b,(j-t)k+r}+ \sum_{i=1}^{t-1}(-1)^i \left(\binom{t-1}{i}+\binom{t-1}{i-1}\right)2^{t-i}H_{b,(j-t)k+r-i}\\
			&\qquad\qquad\qquad~~+(-1)^{t} \binom{t-1}{t-1}2^{t-t}H_{b,(j-t)k+r-t}\\
			&= 2^{t}H_{b,(j-t)k+r}+ \sum_{i=1}^{t-1}(-1)^i \binom{t}{i}2^{t-i}H_{b,(j-t)k+r-i}+(-1)^{t}H_{b,(j-t)k+r-t}\\
			&=\sum_{i=0}^{t}(-1)^i \binom{t}{i}2^{t-i}H_{b,(j-t)k+r-i},
\end{align*}
		and the identity is satisfied for $ t $. We must assume that $ t\leq \left\lfloor \frac{bk^2-bk-1+jk+r}{k+1} \right\rfloor $, because in this case
		\[
			 bk^2-bk-1+(j-t)k+r-i\geq bk^2-bk-1+(j-t)k+r-t\geq 0,
		\]
for all $0\le i\le t $. Therefore, $ H_{b,(j-t)k+r-i} $ is well defined for all $0\le i\le t $.
		
This completes the proof of this lemma. \qedhere
\end{enumerate}
\end{proof}

The above lemmas will allow us, in Section~\ref{proofMT}, to characterize all entries of the matrices $ M_b $ and therefore all elements of $ H_{n} $. For this purpose we introduce the following notation:
\begin{equation}\label{notationpsi}
	\psi(v,w):=\binom{v}{w}+\binom{v+1}{w+1}.
\end{equation}
Note that 
\begin{equation}
	\psi(v,-1)=1 ~\text{ for all }~v\neq -1~~\text{ and }~~\psi(v,w)=0 ~\text{ for all }~w>v\geq0.\label{eq12not}
\end{equation}

We prove the following properties of $\psi(v,w)$.
\begin{lema}\label{lemaproperpsi}
	The funtion $\psi$ satisfies:
	\begin{enumerate}
		\item [$(1)$] $ \psi(v,w)+\psi(v,w+1)=\psi(v+1,w+1) $.	
		\item [$(2)$] $ \sum_{i=0}^{n} \psi(v+i,v)=\psi(v+n+1,v+1)$.
		\item [$(3)$]	$ \sum_{i=-1}^{n} \psi(v+i,i)=\psi(v+n+1,n)$.
		\item [$(4)$]	$ \sum_{i=w}^{v} \psi(i-1,w-1)=\psi(v,w)$.
		\item [$(5)$] $ \sum_{i=0}^{w}\binom{u}{i}\psi(v,w-i-1)=\psi(u+v,w-1)$.
	\end{enumerate}
\end{lema}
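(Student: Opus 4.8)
The plan is to reduce all five identities to three classical facts about binomial coefficients: Pascal's rule $\binom{n}{r}+\binom{n}{r+1}=\binom{n+1}{r+1}$, the hockey-stick identity $\sum_{j=r}^{n}\binom{j}{r}=\binom{n+1}{r+1}$, and Vandermonde's convolution $\sum_{i}\binom{u}{i}\binom{v}{m-i}=\binom{u+v}{m}$. For $(1)$, I would simply expand the two terms on the left-hand side via \eqref{notationpsi}, obtaining $\binom{v}{w}+\binom{v}{w+1}+\binom{v+1}{w+1}+\binom{v+1}{w+2}$, then apply Pascal's rule to the pair $\binom{v}{w}+\binom{v}{w+1}$ and to the pair $\binom{v+1}{w+1}+\binom{v+1}{w+2}$ to get $\binom{v+1}{w+1}+\binom{v+2}{w+2}=\psi(v+1,w+1)$.

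For $(2)$, $(3)$ and $(4)$ I would use a short induction on the upper summation limit in which the inductive step is nothing but an application of $(1)$. For instance, for $(3)$ set $S_n=\sum_{i=-1}^{n}\psi(v+i,i)$; then $S_n=S_{n-1}+\psi(v+n,n)=\psi(v+n,n-1)+\psi(v+n,n)=\psi(v+n+1,n)$ by $(1)$, while the base case $n=-1$ reads $\psi(v-1,-1)=1=\psi(v,-1)$ using \eqref{eq12not}. Identities $(2)$ and $(4)$ go through in exactly the same manner, with base cases $\psi(v,v)=2=\psi(v+1,v+1)$ and (inducting on $v\ge w$) $\psi(w-1,w-1)=2=\psi(w,w)$ respectively; alternatively, each of $(2)$--$(4)$ drops out immediately by expanding $\psi$ into its two binomial summands and hitting each resulting sum with the hockey-stick identity.

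Finally, for $(5)$ I would expand $\psi(v,w-i-1)=\binom{v}{w-i-1}+\binom{v+1}{w-i}$ inside the sum, splitting it as $\sum_{i=0}^{w}\binom{u}{i}\binom{v}{w-i-1}+\sum_{i=0}^{w}\binom{u}{i}\binom{v+1}{w-i}$; the $i=w$ term of the first sum vanishes, and Vandermonde's convolution collapses the two sums to $\binom{u+v}{w-1}$ and $\binom{u+v+1}{w}$, whose sum is $\psi(u+v,w-1)$ by \eqref{notationpsi}. The underlying algebra here is entirely routine; the one place where I expect to spend real care is the bookkeeping of index shifts and, above all, checking that the boundary conventions recorded in \eqref{eq12not} (that $\psi(v,-1)=1$ when $v\neq-1$, and $\psi(v,w)=0$ when $w>v\ge 0$) are exactly what is needed for the five identities to hold verbatim over the full ranges of $v$ and $w$ that the arguments in Section~\ref{proofMT} will invoke.
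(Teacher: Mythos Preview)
Your proposal is correct and essentially matches the paper's own proof: the paper dispatches $(1)$ as immediate via Pascal's rule, proves $(2)$--$(4)$ by expanding $\psi$ into its two binomial summands and applying the hockey-stick identity (exactly your ``alternatively'' clause), and proves $(5)$ by the same Vandermonde split you describe. Your primary route for $(2)$--$(4)$ via induction on the upper limit using $(1)$ is an equally valid and equally short variant.
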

\begin{proof} To show that all items are satisfied, we use basic properties of binomial coefficients. Item $(1)$ is immediate. 
\begin{enumerate}[leftmargin=*]
\item [$(2)$] We know that 
		\begin{align*}
			\sum_{i=0}^{n} \psi(v+i,v)&=\sum_{i=v}^{v+n} \binom{i}{v} +\sum_{i=v+1}^{v+n+1} \binom{i}{v+1}=\psi(v+n+1,v+1).
		\end{align*}
		\item[$ (3) $] It is clear that
		\begin{align*}
			\sum_{i=-1}^{n} \psi(v+i,i)&=\sum_{i=0}^{n} \binom{v+i}{i} +\sum_{i=-1}^{n} \binom{v+i+1}{i+1} \\
			&=\sum_{i=v}^{v+n} \binom{i}{v} +\sum_{i=v}^{v+n+1} \binom{i}{v}\\
			&=\psi(v+n+1,n).
		\end{align*}
		\item[$ (4) $] We have
		\begin{align*}
			\sum_{i=w}^{v} \psi(i-1,w-1)&=\sum_{i=w-1}^{v-1} \binom{i}{w-1} +\sum_{i=w}^{v} \binom{i}{w}=\psi(v,w).
		\end{align*}
		\item[$ (5) $] We obtain
		\begin{align*}
			\sum_{i=0}^{w}\binom{u}{i}\psi(v,w-i-1)&=\sum_{i=0}^{w-1}\binom{u}{i}\binom{v}{w-i-1}+\sum_{i=0}^{w}\binom{u}{i}\binom{v+1}{w-i}\\
			&=\psi(u+v,w-1).\qedhere
		\end{align*}
	\end{enumerate}
\end{proof}

%%%%%%%%%%%%%%%%%%%%%%%%%%%%%%%%%%%%%%%%%%%%%%%%%%%%%%%%%%%%%%%%%%%%%%%%%%%%%%%%%%
\section{Proof of Theorem \ref{MT}}\label{proofMT}
%%%%%%%%%%%%%%%%%%%%%%%%%%%%%%%%%%%%%%%%%%%%%%%%%%%%%%%%%%%%%%%%%%%%%%%%%%%%%%%%%%

For items $ (i) $ and $ (ii) $, see containment~\eqref{zeromultcont} and Lemma~\ref{lema0}, item ($a$), respectively. The remaining items are shown below.

\subsection{Item $ (iii) $} 
With notation~\eqref{notationpsi}, we must prove that 
\begin{equation}\label{itemiiiequiv}
	H_{mk+r}=(-1)^{r+1}\psi(m-1,r)2^{m-r-2}\qquad\text{for all}~~~0\leq r<m\leq k-1.
\end{equation}
For this, we apply induction on $ r $. If $ r=0 $, by Lemma~\ref{lema0}, item ($b$), we get that identity~\eqref{itemiiiequiv}  is satisfied. Indeed,
\[
H_{mk}=-(m+1)2^{m-2}=(-1)^{1}\psi(m-1,0)2^{m-2}\qquad\text{for all}~~~1\leq m\leq k-1.
\]
We assume by the  inductive hypothesis that identity~\eqref{itemiiiequiv}  is satisfied for $ r-1 $ and therefore $ 0\leq r-1< m\leq k-1 $; i.e.,
\[
H_{mk+r-1}=(-1)^{r}\psi(m-1,r-1)2^{m-r-1}\qquad\text{for all}~~~r\le m \le k-1.
\]
In particular,
\begin{equation}
		H_{jk+r-1}=(-1)^{r}\psi(j-1,r-1)2^{j-r-1}\qquad\text{for all}~~~r \le j \le m-1.\label{eqhpinduciii}
\end{equation}
Then, by Lemma~\ref{lema0}, item ($c$), Lemma~\ref{lemaproperpsi}, item ($4$) and identity~\eqref{eqhpinduciii}, we obtain  
\begin{align*}
	H_{mk+r}&=-\sum_{j=r}^{m-1} 2^{m-1-j}\left((-1)^{r}\psi(j-1,r-1)2^{j-r-1}\right)\\
	&=(-1)^{r+1}\left(\sum_{j=r}^{m-1}\psi(j-1,r-1)\right) 2^{m-r-2}\\
	&=(-1)^{r+1}\psi(m-1,r)2^{m-r-2}\qquad\text{for all}~~~1\leq r<m\leq k-1.
\end{align*}
This completes the proof of item $ (iii) $.

\subsection{Item $ (iv) $} 

\subsubsection{Case $ k=2 $:} Here we must prove that for all $ m\geq 1 $,
\begin{equation}\label{eqcaseivs21}
		H_{2m+r}=\begin{cases}
		\sum_{i=0}^{m-1}\psi(m-i-1,2i-1)2^{m-3i-1}, & \text{ if }~r=-1,\\
		-\sum_{i=0}^{m-1}\psi(m-i-1,2i)2^{m-3i-2},  & \text{ if }~r=0.
	\end{cases}
\end{equation}
For this we proceed by induction on $ m $. Suppose that identity~\eqref{eqcaseivs21} is satisfied up to $ m-1\geq2 $ (the cases $ m=1 $ and $ m=2 $ are easily verified). Then using~\eqref{identiti2H}, \eqref{eq12not}, Lemma~\ref{lemaproperpsi} and the fact that $ \psi(0,2m-2)=\psi(0,2m-3)=0 $ (since $ m\geq 2 $), we obtain
\begin{small}
	\begin{align*}
		&H_{2m-1}=2H_{2(m-1)-1}-H_{2(m-2)}\\
		&=\sum_{i=0}^{m-2}\psi(m-i-2,2i-1)2^{m-3i-1}+\sum_{i=0}^{m-3}\psi(m-i-3,2i)2^{m-3i-4}\\
		&=\psi(m-2,-1)2^{m-1}+\sum_{i=1}^{m-2}(\psi(m-i-2,2i-1)+\psi(m-i-2,2i-2))2^{m-3i-1}\\
		&=\psi(m-1,-1)2^{m-1}+\sum_{i=1}^{m-2}\psi(m-i-1,2i-1)2^{m-3i-1}+\psi(0,2m-3)2^{-2(m-1)}\\
		&=\sum_{i=0}^{m-1}\psi(m-i-1,2i-1)2^{m-3i-1},
	\end{align*}
\end{small}
and 
\begin{small}
		\begin{align*}
		&H_{2m}=2H_{2(m-1)}-H_{2(m-1)-1}\\
		&=-\sum_{i=0}^{m-2}\psi(m-i-2,2i)2^{m-3i-2}-\sum_{i=0}^{m-2}\psi(m-i-2,2i-1)2^{m-3i-2}\\
		&=-\psi(0,2m-2)2^{-2m+1}-\sum_{i=0}^{m-2}\left(\psi(m-i-2,2i)+\psi(m-i-2,2i-1)\right)2^{m-3i-2}\\
		&=-\sum_{i=0}^{m-1}\psi(m-i-1,2i)2^{m-3i-2}.
	\end{align*}
\end{small}

\subsubsection{Case $ k>2 $:}

To treat this case, we present the following two lemmas.

\begin{lema}\label{lemaitemiv00}
	For all $ k>2 $ and $ j,r\in[0,k-1] $, it holds that
	\begin{align}
		H_{1,jk+r}&=(-1)^{r} 2^{k+j-r-2} \psi(k+j-2,r-1)\nonumber\\
		&+(-1)^{k+r} 2^{j-r-3} \psi(k+j-3,k+r-1).\label{firtstepinducb}
	\end{align}
\end{lema}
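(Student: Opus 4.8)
The formula~\eqref{firtstepinducb} describes all entries of $M_1$, so the plan is to prove it by induction on the row index $j\in[0,k-1]$. The engine is the two--term recurrence~\eqref{identiti2Hb}, which for $b=1$ gives $H_{1,jk+r}=2H_{1,(j-1)k+r}-H_{1,(j-1)k+r-1}$ whenever $j\ge1$, and the bookkeeping is done with the $\psi$--identities of Lemma~\ref{lemaproperpsi} and the conventions~\eqref{eq12not}.

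For the base case $j=0$ I would use that $H_{1,r}=H_{(k^2-k-1)+r}=H_{(k-1)k+(r-1)}$, so Theorem~\ref{MT}(ii) gives $H_{1,0}=2^{k-2}$ and Theorem~\ref{MT}(iii) gives $H_{1,r}=(-1)^{r}\psi(k-2,r-1)2^{k-r-2}$ for $1\le r\le k-1$. To see this is~\eqref{firtstepinducb} at $j=0$, note that the first summand there is exactly $(-1)^{r}2^{k-r-2}\psi(k-2,r-1)$ (using $\psi(k-2,-1)=1$ from~\eqref{eq12not} when $r=0$), while the second summand $(-1)^{k+r}2^{-r-3}\psi(k-3,k+r-1)$ vanishes because $k+r-1\ge k-1>k-3$ and $k+r\ge k>k-2$ force both binomial coefficients inside $\psi(k-3,k+r-1)$ to be zero.

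For the inductive step, fix $j\ge1$ and assume~\eqref{firtstepinducb} for every $(j',r')$ with $j'<j$ and $r'\in[0,k-1]$. When $r\ge1$, both $H_{1,(j-1)k+r}$ and $H_{1,(j-1)k+r-1}$ are covered by this hypothesis (or by the base case when $j=1$); substituting, clearing the powers of $2$, and collecting, the bracket multiplying $(-1)^{r}2^{k+j-r-2}$ is $\psi(k+j-3,r-1)+\psi(k+j-3,r-2)$ and the one multiplying $(-1)^{k+r}2^{j-r-3}$ is $\psi(k+j-4,k+r-1)+\psi(k+j-4,k+r-2)$, and two applications of Lemma~\ref{lemaproperpsi}(1) turn these into $\psi(k+j-2,r-1)$ and $\psi(k+j-3,k+r-1)$, i.e., into~\eqref{firtstepinducb} at $(j,r)$. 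When $r=0$, I would instead write $(j-1)k-1=(j-2)k+(k-1)$ in $H_{1,jk}=2H_{1,(j-1)k}-H_{1,(j-1)k-1}$: for $j=1$ the last term is $H_{1,-1}=0$ (by~\eqref{consecnotationH0} and~\eqref{zeromultcont}), so the claim reduces to $H_{1,k}=2^{k-1}$, which one checks against~\eqref{firtstepinducb} directly; for $j\ge2$ one substitutes~\eqref{firtstepinducb} for $H_{1,(j-1)k}$ and $H_{1,(j-2)k+(k-1)}$ and runs the same computation, now also using $\psi(k+j-3,-1)=1$ and $\psi(k+j-5,2k-2)=0$ (the latter valid since $j\le k-1$ makes both binomials zero) to dispose of the single extra term contributed by the $(k-1)$st--column entry.

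All of this is routine binomial algebra; the part that needs attention is the boundary bookkeeping — the split into $r=0$ and $r\ge1$, the special status of $j=1$ on the $r=0$ line, and verifying that each stray $\psi$--value near the edges is zero because its lower index exceeds its nonnegative upper index. I expect that handling these edge cases cleanly, rather than the central manipulation with $\psi$, is where the genuine work lies.
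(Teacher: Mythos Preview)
Your argument is correct and, in fact, noticeably cleaner than the paper's. The paper proves the lemma by a double induction on $(j,r)$ split into five cases $(A)$--$(E)$: it handles $j=0$ as you do, but for $r=0,\ j\ge 2$ it goes back through Lemma~\ref{lemaproperty}(I) and (III), rewrites $H_{1,jk}$ as a sum over the top row of $M_1$, exchanges the order of summation, and then invokes Lemma~\ref{lemaproperpsi}(5); similarly cases $(D)$ and $(E)$ rely on Lemma~\ref{lemaproperty}(II) and Lemma~\ref{lemaproperpsi}(2). Your route bypasses all of this: strong induction on the row index $j$ together with the single step~\eqref{identiti2Hb} and one application of Lemma~\ref{lemaproperpsi}(1) per summand is enough, because the two $\psi$--terms on the right side of~\eqref{firtstepinducb} transform independently under the recurrence. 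The only subtlety, as you note, is that for $r=0$ the second predecessor sits in row $j-2$ (or equals $H_{1,-1}=0$ when $j=1$), which your strong induction accommodates. What you gain is brevity and transparency; what the paper's organisation buys is perhaps a template for the more involved Lemma~\ref{lemaitemiv}, where the interplay between $b$, $j$, and $r$ genuinely requires some of the heavier $\psi$--identities.
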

\begin{proof} We apply double induction on $ j,r\in[0,k-1] $.
	\begin{enumerate}[leftmargin=*]
		\item[$ (A) $] Let $ j=r=0 $. Since $ k>2 $, by observation~\eqref{eq12not}, it follows that
		\[
		\psi(k-2,-1)=1 \quad\text{ and }\quad \psi(k-3,k-1)=0.
		\]
		Then, using item $ (ii) $, we obtain
		\[
		H_{1,0}=H_{(k-1)k-1}=2^{k-2}=2^{k-2} \psi(k-2,-1)+(-1)^{k} 2^{-3} \psi(k-3,k-1)
		\] 
		fulfilling identity~\eqref{firtstepinducb}.
		
		%%%%%%%%%%%%%%%%%%%%%%%%%%%%%%%%%%%%%%%%%%%%%%%%%%%%%%
		
		\item[$ (B) $] Let $ j=0 $ and $ r\in[1,k-1] $. Then, by item $ (iii) $, we get that
		\begin{equation*}
			H_{1,r}=H_{k(k-1)+r-1}=(-1)^{r}\psi(k-2,r-1)2^{k-r-2}
		\end{equation*}
		and by observation~\eqref{eq12not} it follows that 
		\begin{align}
			H_{1,r}&=(-1)^{r} 2^{k-r-2}\psi(k-2,r-1)\label{eqitemB1r}\\
			&=(-1)^{r} 2^{k-r-2} \psi(k-2,r-1)+(-1)^{k+r} 2^{-r-3} \psi(k-3,k+r-1).\nonumber
		\end{align}
		Thus, identity~\eqref{firtstepinducb} is satisfied.
		
		%%%%%%%%%%%%%%%%%%%%%%%%%%%%%%%%%%%%%%%%%%%%%%%%%%%%%%%%%%%%%%%%%
		
		\item[$ (C) $] Let $ r=0 $ and $ j\in[1,k-1] $. If $ j=1 $, by Lemma~\ref{lemaproperty}, item ($I$) and observation~\eqref{eq12not}, we have that identity~\eqref{firtstepinducb} is satisfied. In fact, 
		\begin{align}
			H_{1,k}&=2H_{1,0}-H_{0,(k-2)k+(k-1)}=2H_{1,0}=2^{k-1}\label{hiducitemC}\\
			&=2^{k-1} \psi(k-1,-1)+(-1)^{k} 2^{-2} \psi(k-2,k-1).\nonumber
		\end{align}	
		If $ j\in[2,k-1] $, we note that by Lemma~\ref{lemaproperty}, item ($I$), combined with item $(III)$ \footnote{This item is used with $ j=i $, $ r=k-1 $ and $ t=i $ for the elements $ H_{1,ik+k-1} $.} of this same Lemma and items $ (A) $, 
		$ (B) $ above, we get
		\begin{align}
			H_{1,jk}&=2^{j-1}H_{1,k}-\sum_{i=0}^{j-2}2^{j-i-2}H_{1,ik+k-1}\nonumber\\
			&=2^{j-1}H_{1,k}+\sum_{i=0}^{j-2}\left(\sum_{\ell=0}^{i}(-1)^{\ell+1} \binom{i}{\ell}2^{j-2-\ell} H_{1,k-1-\ell}\right)\nonumber\\
			&=2^{j-1}H_{1,k}+\left(\sum_{\ell=0}^{j-2}(-1)^{\ell+1} \left(\sum_{i=\ell}^{j-2}\binom{i}{\ell}\right)2^{j-2-\ell} H_{1,k-1-\ell}\right)\nonumber\\
			&=2^{j-1}H_{1,k}+\left(\sum_{\ell=0}^{j-2}(-1)^{\ell+1}\binom{j-1}{\ell+1}2^{j-2-\ell} H_{1,k-1-\ell}\right)\nonumber\\
			&=2^{j-1}H_{1,k}+(-1)^{k}2^{j-3}\left(\sum_{\ell=1}^{j-1}\binom{j-1}{\ell}\psi(k-2,k-1-\ell)\right).\label{eqcaseC001}
			\end{align}
		In addition, 
		\begin{equation}
			\psi(k-2,k-1)=0\quad\text{ and }\quad \binom{j-1}{\ell}=0 \quad \text{for} \quad j\le \ell \le k.\label{eqcaseC002}
		\end{equation}
		Then, by identities \eqref{hiducitemC}, \eqref{eqcaseC001}, \eqref{eqcaseC002}, observation~\eqref{eq12not} and Lemma~\ref{lemaproperpsi} (item $5$), we obtain 			
		\begin{align*}
			H_{1,jk}&=2^{k+j-2}+(-1)^{k}2^{j-3}\left(\sum_{\ell=0}^{k}\binom{j-1}{\ell}\psi(k-2,k-\ell-1)\right)\\
			&=2^{k+j-2}\psi(k+j-2,-1)+(-1)^{k}2^{j-3}\psi(k+j-3,k-1).
		\end{align*}
		Thus, identity~\eqref{firtstepinducb} is satisfied.
		
		%%%%%%%%%%%%%%%%%%%%%%%%%%%%%%%%%%%%%%%%%%%%%%%%%%%%%%%%%%%%%%%%%
		\item[$ (D) $] Let $ r\in[1,k-1] $ be fixed and assume as inductive hypothesis that identity~\eqref{firtstepinducb} is satisfied up to $ j-1\geq0 $. Then replacing $ r $ by $ r-1 $ in the inductive hypothesis, we have 
		\begin{align*}
			H_{1,ik+r-1} &= (-1)^{r-1} 2^{k+i-r-1} \psi(k+i-2,r-2)\\
			             &+(-1)^{k+r-1} 2^{i-r-2} \psi(k+i-3,k+r-2),
		\end{align*}
		for all $ 1 \le i \le  j-1 $ and $ r-1\in[0,k-2] $ fixed\footnote{The case $ r-1=0 $ is obtained from Item~$ (C) $ above.}. In particular, this also implies (see~\eqref{eq12not}) that
		\begin{align*}
			H_{1,k+r}&=(-1)^{r} 2^{k-r-1}\psi(k-1,r-1)+(-1)^{k+r} 2^{-r-2} \psi(k-2,k+r-1)\\
			&=(-1)^{r} 2^{k-r-1}\psi(k-1,r-1)
		\end{align*}
		for $ r\in[1,k-1] $ fixed. Thus, by the above two identities and Lemma~\ref{lemaproperty}, item~ ($II$), we get 
		\begin{align}
			H_{b,jk+r}&=2^{j-1}H_{b,k+r}-\sum_{i=1}^{j-1}2^{j-1-i}H_{b,ik+r-1}\nonumber\\
			&=(-1)^{r} 2^{k+j-r-2}\psi(k-1,r-1)\nonumber\\
			&\qquad\qquad+(-1)^{r}2^{k+j-r-2} \sum_{i=1}^{j-1}\psi(k+i-2,r-2)\nonumber\\
			&\qquad\qquad\qquad+(-1)^{k+r} 2^{j-r-3} \sum_{i=1}^{j-1}\psi(k+i-3,k+r-2)\label{identitycprima}		
		\end{align}		
		for all $r, j\in[1,k-1]$, with $ r $ fixed. We note that using the identity~$ (1) $ of Lemma~\ref{lemaproperpsi}, it follows that
		\begin{align}
			&\psi(k-1,r-1)+\sum_{i=1}^{j-1}\psi(k+i-2,r-2)\nonumber\\
			&=\psi(k,r-1)+\sum_{i=2}^{j-1}\psi(k+i-2,r-2)\nonumber\\
			&=\psi(k+1,r-1)+\sum_{i=3}^{j-1}\psi(k+i-2,r-2)\nonumber\\
			&~~\vdots\nonumber\\
			&=\psi(k+j-3,r-1)+\sum_{i=j-1}^{j-1}\psi(k+i-2,r-2)\nonumber\\
			&=\psi(k+j-2,r-1).\label{identitycprima0}
		\end{align}		
		Furthermore, by observation~\eqref{eq12not},
		\begin{align}
			  \psi(k+i-3,k+r-2)&=0&  &\text{for}  &i&<r+1.\label{identitycprima1}
		\end{align}
		Then, by the identities \eqref{identitycprima}, \eqref{identitycprima0}, \eqref{identitycprima1} and Lemma~\ref{lemaproperpsi}, item ($2$), we obtain
		\begin{align*}
			H_{b,jk+r}&=(-1)^{r} 2^{k+j-r-2}\left(\psi(k-1,r-1)+\sum_{i=1}^{j-1}\psi(k+i-2,r-2)\right)\nonumber\\
			&+(-1)^{k+r} 2^{j-r-3} \sum_{i=1}^{j-1}\psi(k+i-3,k+r-2)\\
			&=(-1)^{r}2^{k+j-r-2} \psi(k+j-2,r-1)\\
			&+(-1)^{k+r} 2^{j-r-3} \psi(k+j-3,k+r-1).
		\end{align*}
		Therefore, identity~(\ref{firtstepinducb}) is satisfied.
		
		%%%%%%%%%%%%%%%%%%%%%%%%%%%%%%%%%%%%%%%%%%%%%%%%%%%%%%
		
		\item[$ (E) $] We fix $ j\in[1,k-1] $ and take as inductive hypothesis the identity~\eqref{firtstepinducb} for $ r-1\geq0 $. In items $ (A) $, $ (C) $ and $ (D) $ above, we proved that identity~\eqref{firtstepinducb} is satisfied on $ H_{ik+r-1} $, for fixed $ r-1\geq 0 $ and all $ i\in[0,k-1] $. Then 
		\begin{align*}
			H_{1,ik+r-1}&=(-1)^{r-1} 2^{k+i-r-1} \psi(k+i-2,r-2)\nonumber\\
			&+(-1)^{k+r-1} 2^{i-r-2} \psi(k+i-3,k+r-2)
		\end{align*}
		holds for all $ 0\le i \le j-1 $. Thus, by the above identity, identity~\eqref{eqitemB1r} and Lemma~\ref{lemaproperty}, item ($II$), it follows that
		\begin{align}
			H_{1,jk+r}&=2^{j}H_{1,r}-\sum_{i=0}^{j-1}2^{j-1-i}H_{1,ik+r-1}\nonumber\\
			&=(-1)^{r} 2^{k+j-r-2}\psi(k-2,r-1)
			+(-1)^{r} 2^{k+j-r-2}\sum_{i=0}^{j-1} \psi(k+i-2,r-2)\nonumber\\
			&+(-1)^{k+r} 2^{j-r-3}\sum_{i=0}^{j-1} \psi(k+i-3,k+r-2)\label{eqitemE1}
		\end{align}
		holds for all $r, j\in[1,k-1]$, with $ j $ fixed. Now, by identity~$ (1) $ of Lemma~\ref{lemaproperpsi} and identity~\eqref{identitycprima0}, we deduce that
		\begin{align}
			&\psi(k-2,r-1)+\sum_{i=0}^{j-1} \psi(k+i-2,r-2)\nonumber\\
			&=\psi(k-1,r-1)+\sum_{i=1}^{j-1}\psi(k+i-2,r-2)=\psi(k+j-2,r-1).\label{eqitemE2}
		\end{align}
		Therefore, identities \eqref{identitycprima1}, \eqref{eqitemE1}, \eqref{eqitemE2} and Lemma~\ref{lemaproperpsi}, item ($2$), lead us to 
		\begin{align*}
				H_{1,jk+r}&=(-1)^{r} 2^{k+j-r-2}\psi(k+j-2,r-1)\nonumber\\
				&+(-1)^{k+r} 2^{j-r-3}\sum_{i=r+1}^{j-1} \psi(k+i-3,k+r-2)\\
				&=(-1)^{r} 2^{k+j-r-2}\psi(k+j-2,r-1)\nonumber\\
				&+(-1)^{k+r} 2^{j-r-3}\sum_{i=0}^{j-r-2} \psi(k+r-2+i,k+r-2)\\
				&=(-1)^{r} 2^{k+j-r-2}\psi(k+j-2,r-1)\nonumber\\
				&+(-1)^{k+r} 2^{j-r-3}\psi(k+j-3,k+r-1),
		\end{align*}
		thus satisfying identity~(\ref{firtstepinducb}).
	\end{enumerate}
	Finally, items $ (A) $, $ (B) $, $ (C) $, $ (D) $ and $ (E) $ above prove the lemma. \qedhere
	
\end{proof}

The following result generalizes the previous one.
\begin{lema}\label{lemaitemiv}
	For all $ k>2 $; $ j,r\in[0,k-1] $ and $ b\in\mathbb{Z}^{+} $, it holds that
	\begin{equation}
		H_{b,jk+r}=\sum_{i=0}^{b} (-1)^{i k+r} 2^{(b-i)k+j-r-b-i-1} \psi(bk+j-b-i-1,ik+r-1).\label{lemaitemiveqqe00}
	\end{equation}
\end{lema}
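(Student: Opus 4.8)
The statement generalizes Lemma~\ref{lemaitemiv00} (which is the case $b=1$) to arbitrary $b\in\mathbb{Z}^+$, so the natural approach is induction on $b$, with Lemma~\ref{lemaitemiv00} serving as the base case $b=1$. The engine of the induction will be the "chaining" identity~\eqref{roweqrow1}, namely $H_{b,r}=H_{b-1,(k-1)k+r}$ for $0\le r\le k-1$, which expresses the top row of matrix $M_b$ in terms of the bottom row of $M_{b-1}$, together with identity $(II)$ of Lemma~\ref{lemaproperty}, which expresses an arbitrary entry $H_{b,jk+r}$ below the diagonal in terms of the entries $H_{b,k+r}$ and $H_{b,ik+r-1}$ of the first two rows. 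So the scheme mirrors exactly the structure of the proof of Lemma~\ref{lemaitemiv00}, except that the "first row" data is no longer given by items $(ii)$, $(iii)$ of Theorem~\ref{MT} but by the inductive hypothesis applied to $M_{b-1}$.

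First I would set up a double induction, the outer one on $b$ and the inner one running over the pair $(j,r)\in[0,k-1]^2$ in the same lexicographic-type order used in Lemma~\ref{lemaitemiv00} (first establish $j=r=0$, then $j=0$ with $r\ge 1$, then $r=0$ with $j\ge 1$, then the general step in $j$ with $r$ fixed, then the general step in $r$ with $j$ fixed). For the base row data: by~\eqref{roweqrow1}, $H_{b,0}=H_{b-1,(k-1)k+(k-1)}$ and more generally $H_{b,r}=H_{b-1,(k-1)k+r}$, and by the inductive hypothesis (applied with the matrix index $b-1$ and entry index $j=k-1$) these equal the right-hand side of~\eqref{lemaitemiveqqe00} with $b$ replaced by $b-1$, $j$ by $k-1$, $r$ by $r$. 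A short simplification of the resulting $\psi$-arguments — using the shift $b\mapsto b-1$, $j\mapsto k-1$ and reindexing the sum — should recover precisely the claimed formula for $H_{b,r}$ (i.e. the $j=0$ case of~\eqref{lemaitemiveqqe00}), and likewise for $H_{b,k}$ one uses item $(I)$ of Lemma~\ref{lemaproperty} together with~\eqref{consecnotationH0}. Then, for general $j$, feed these expressions into item $(II)$ of Lemma~\ref{lemaproperty}, giving
\[
H_{b,jk+r}=2^{j-1}H_{b,k+r}-\sum_{i=1}^{j-1}2^{j-1-i}H_{b,ik+r-1},
\]
substitute the already-known closed forms for $H_{b,k+r}$ and $H_{b,ik+r-1}$, interchange the order of summation, and collapse the inner $\psi$-sums using parts $(1)$, $(2)$ (and where an extra binomial convolution appears, part $(5)$) of Lemma~\ref{lemaproperpsi} — exactly the telescoping computations~\eqref{identitycprima0}, \eqref{eqitemE2} that already appear in the proof of Lemma~\ref{lemaitemiv00}. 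The vanishing identities~\eqref{eq12not} (that $\psi(v,w)=0$ for $w>v\ge 0$ and $\psi(v,-1)=1$) handle the boundary terms of the various sums, as they did before.

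The step I expect to be the main obstacle is the bookkeeping of the double sum. When $H_{b,ik+r-1}$ is replaced by its $(b+1)$-term expansion from~\eqref{lemaitemiveqqe00} and then summed against $\sum_{i}2^{j-1-i}(\cdots)$, one obtains a double sum over the "column index" $i$ and the "$b$-layer index" (call it $\ell$, running $0$ to $b$); the powers of $2$ must be matched carefully so that, after swapping the two summations, the inner sum over $i$ becomes one of the standard $\psi$-telescopes of Lemma~\ref{lemaproperpsi}, item $(2)$ or $(4)$, producing a single $\psi$ with the correct arguments $\psi(bk+j-b-\ell-1,\ell k+r-1)$. Getting the exponent arithmetic $(b-\ell)k+j-r-b-\ell-1$ to come out right, and checking that the index ranges of the collapsed sums land exactly on the endpoints dictated by~\eqref{eq12not}, is delicate but entirely mechanical; there is no genuinely new idea beyond what already powers Lemma~\ref{lemaitemiv00}. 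I would organize the write-up as items $(A)$–$(E)$ paralleling that lemma, and at each stage cite the corresponding computation from its proof rather than redoing it, changing only the input row data from "$b=1$, via Theorem~\ref{MT}" to "general $b$, via the inductive hypothesis and~\eqref{roweqrow1}."
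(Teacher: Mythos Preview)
Your overall plan---outer induction on $b$ with Lemma~\ref{lemaitemiv00} as base case, and an inner induction on $(j,r)$ driven by Lemma~\ref{lemaproperty} together with the $\psi$-identities of Lemma~\ref{lemaproperpsi}---is exactly what the paper does, and your assessment that the remaining work is mechanical bookkeeping with no idea beyond Lemma~\ref{lemaitemiv00} is accurate. The one organisational difference worth flagging is that the paper does \emph{not} recycle the $(A)$--$(E)$ scheme of Lemma~\ref{lemaitemiv00}; instead it splits into two cases, $(A')$ $j\le r$ and $(B')$ $r<j$. In $(A')$ it invokes item~$(III)$ of Lemma~\ref{lemaproperty} with $t=j$ (rather than item~$(II)$) to pull $H_{b,jk+r}$ straight back to the top row of $M_b$, then applies \eqref{roweqrow1} and the hypothesis for $b-1$, collapsing the resulting binomial convolution via item~$(5)$ of Lemma~\ref{lemaproperpsi}; this dispatches all of $j\le r$ at once, which is cleaner than your plan of first rebuilding row~$0$ and then climbing with item~$(II)$. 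Case $(B')$ then handles $r<j$ by a sub-induction on $r$ and $j$ much closer to your $(C)$--$(E)$, but note that the entry case $j=1$, $r=0$ is genuinely heavier than in Lemma~\ref{lemaitemiv00} because now $H_{b,-1}=H_{b-1,(k-2)k+(k-1)}\ne 0$, so you cannot simply ``cite the corresponding computation'' from the old $(C)$; the paper carries out this step explicitly. (Minor slip in your write-up: by \eqref{roweqrow1} one has $H_{b,0}=H_{b-1,(k-1)k}$, not $H_{b-1,(k-1)k+(k-1)}$.)
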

\begin{proof}
	Here, we proceed by induction in $ b $. The case $ b=1 $ follows from Lemma~\ref{lemaitemiv00}. We state as inductive hypothesis that
	\begin{equation}
			H_{b-1,jk+r}=\sum_{i=0}^{b-1} (-1)^{i k+r} 2^{(b-i-1)k+j-r-b-i} \psi((b-1)k+j-b-i,ik+r-1),\label{eqlemafinal1}
	\end{equation}
	for $ b-1\geq 1 $, $ k>2 $ and $ j,r\in[0,k-1] $. Next, we consider the following cases.
	\begin{enumerate}[leftmargin=*]
		\item[$ (A') $] Let $ j\leq r $. Then, by Lemma~\ref{lemaproperty} (item $III$)  with $ t=j $ and the identities~\eqref{eqlemafinal1},~\eqref{roweqrow1}, we have that
		\begin{align}
			&H_{b,jk+r}=\sum_{\ell=0}^{j}(-1)^\ell \binom{j}{\ell}2^{j-\ell} H_{b-1,(k-1)k+r-\ell}\nonumber\\
			&=\sum_{i=0}^{b-1} (-1)^{i k+r} 2^{(b-i)k+j-r-b-i-1} \sum_{\ell=0}^{j}\binom{j}{\ell} \psi(bk-b-i-1,ik+r-\ell-1).\label{eqlemafinal2}
		\end{align}
		We note that $ j\leq r<ik+r $ for all $ i\geq0 $, and
		\begin{equation}
				\binom{j}{\ell}=0\quad\text{ for } j+1 \le \ell \le  ik+r. \label{eqlemafinal3}
		\end{equation}
		 We use the identities \eqref{eqlemafinal2}, \eqref{eqlemafinal3} and Lemma~\ref{lemaproperpsi}, item ($5$), to obtain 
		 \begin{small}
		 	\begin{align}
		 		H_{b,jk+r}&=\sum_{i=0}^{b-1} (-1)^{i k+r} 2^{(b-i)k+j-r-b-i-1} \sum_{\ell=0}^{ik+r}\binom{j}{\ell} \psi(bk-b-i-1,ik+r-\ell-1)\nonumber\\
		 		&=\sum_{i=0}^{b-1} (-1)^{i k+r} 2^{(b-i)k+j-r-b-i-1} \psi(bk+j-b-i-1,ik+r-1).\label{eqlemafinal4}
		 	\end{align}
		 \end{small}
	 		In fact, $ b(k-2)+j-1<bk+r-1 $ and therefore (see~\eqref{eq12not})
	 		\[
	 			\psi(b(k-2)+j-1,bk+r-1)=0.
	 		\]
	 		So, the sum in identity~\eqref{eqlemafinal4} can be extended to $ b $, fulfilling identity~\eqref{lemaitemiveqqe00} in this case.
	 		
	 		\item[$ (B') $] Let $ r<j  $. Here, we proceed by induction on $ r $.
	 		
	 		\begin{itemize}[leftmargin=*]
	 		\item For $ r=0 $, we apply induction on $ j>0 $. If $ j=1 $, by the identities \eqref{roweqrow1}, \eqref{consecnotationH0} and \eqref{eqlemafinal1} we obtain that
	 		\begin{align*}
	 			H_{b,k} &= 2H_{b,0}-H_{b,-1}=2H_{b-1,(k-1)k}-H_{b-1,(k-2)k+(k-1)}\\
	 			&=\sum_{i=0}^{b-1} (-1)^{i k} 2^{(b-i)k-b-i} \psi(bk-b-i-1,ik-1)\\
	 			&-\sum_{i=0}^{b-1} (-1)^{i k+k-1} 2^{(b-i-1)k-b-i-1} \psi(bk-b-i-2,ik+k-2).
	 		\end{align*}
 			In addition, 
\begin{align*}
 \psi(bk-b-b-1,bk-1)&=0,\\ 
 \psi(bk-b-1,-1)=\psi(bk-b,-1)&=1
\end{align*}
(see \eqref{eq12not}) and by Lemma~\ref{lemaproperpsi}, item ($1$) we obtain
 			\begin{small}
 				\begin{align}
 					&H_{b,k}=\sum_{i=0}^{b-1} (-1)^{i k} 2^{(b-i)k-b-i} \psi(bk-b-i-1,ik-1)\nonumber\\
 					&+\sum_{i=1}^{b} (-1)^{ik} 2^{(b-i)k-b-i} \psi(bk-b-i-1,ik-2)\nonumber\\
 					&=2^{bk-b} \psi(bk-b-1,-1)\nonumber\\
 					&+\sum_{i=1}^{b} (-1)^{ik} 2^{(b-i)k-b-i} \left(\psi(bk-b-i-1,ik-1)+\psi(bk-b-i-1,ik-2)\right)\nonumber\\
 					&=\sum_{i=0}^{b} (-1)^{ik} 2^{(b-i)k-b-i}\psi(bk-b-i,ik-1)\label{eqitemBprime001}
 				\end{align}
 			\end{small}
 			satisfying identity~\eqref{lemaitemiveqqe00}.
 			
 			\item Now, we assume that identity~\eqref{lemaitemiveqqe00} is satisfied for $ j-1 $, when $ r=0 $. We obtain that
 			\begin{align}
 				&H_{b,jk}=2H_{b,(j-1)k}-H_{b,(j-1)k-1}=2H_{b,(j-1)k}-H_{b,(j-2)k+k-1}\nonumber\\
 				&=-H_{b,(j-2)k+k-1}+\sum_{i=0}^{b} (-1)^{i k} 2^{(b-i)k+j-b-i-1} \psi(bk+j-b-i-2,ik-1).\label{eqitemBprime01}
 			\end{align}
 			We also note that $ j-2<k-1 $, so item $ (A') $ above implies that
 			\begin{equation}
 				H_{b,(j-2)k+k-1}=\sum_{i=0}^{b} (-1)^{ik+k-1} 2^{(b-i-1)k+j-b-i-2} \psi(bk+j-b-i-3,ik+k-2).\label{eqitemBprime02}
 			\end{equation}
 			Then, since 
 	\begin{align*}
 		\psi(bk+j-2b-3,bk+k-2)& = 0\\
 		\psi(bk+j-b-2,-1)=\psi(bk+j-b-1,-1)&=1
	\end{align*} 			
 			(see~\eqref{eq12not}), we use identities \eqref{eqitemBprime01}, \eqref{eqitemBprime02} and Lemma~\ref{lemaproperpsi}, item ($1$) to obtain
 			\begin{align*}
 				H_{b,jk}&=2^{bk+j-b-1} \psi(bk+j-b-2,-1)\\
 				&+\sum_{i=1}^{b} (-1)^{ik} 2^{(b-i)k+j-b-i-1} \psi(bk+j-b-i-2,ik-2)\\
 				&+\sum_{i=1}^{b} (-1)^{i k} 2^{(b-i)k+j-b-i-1} \psi(bk+j-b-i-2,ik-1)\\
 				&=\sum_{i=0}^{b} (-1)^{ik} 2^{(b-i)k+j-b-i-1} \psi(bk+j-b-i-1,ik-1).
 			\end{align*}
Therefore identity~\eqref{lemaitemiveqqe00} holds in this case. This concludes the induction on $ j $ when $ r=0 $. Thus, the base case of induction on $ r $ is done.  			
 			 			
 			\item Let $ j=r+1 $. Item $ (A') $ above implies that  
 			\begin{equation}
 				H_{b,k+\ell}=\sum_{i=0}^{b} (-1)^{i k+\ell} 2^{(b-i)k-\ell-b-i} \psi(bk-b-i,ik+\ell-1)\label{eqitemBprime03}
 			\end{equation}
 			for $ 1 \le \ell \le r $. By Lemma~\ref{lemaproperty}, item ($III$) with $ t=r $, and the identities \eqref{eqitemBprime001}, \eqref{eqitemBprime03}, we obtain 
 			\begin{align}
  H_{b,(r+1)k+r}&=\sum_{\ell=0}^{r} (-1)^{r-\ell} \binom{r}{r-\ell}2^{\ell} H_{b,k+\ell}\nonumber\\
 				&=\sum_{\ell=0}^{r}  \binom{r}{\ell}\left(\sum_{i=0}^{b} (-1)^{i k+r} 2^{(b-i)k-b-i} \psi(bk-b-i,ik+\ell-1)\right)\nonumber\\
 				&=\sum_{i=0}^{b} (-1)^{i k+r} 2^{(b-i)k-b-i} \sum_{\ell=0}^{r} \binom{r}{r-\ell}\psi(bk-b-i,ik+r-\ell-1).\label{eqitemBprime04}
 			\end{align}
 			Furthermore, $ r\leq ik+r $ for $ i\geq 0 $ and 
 			\begin{equation}
 		\binom{r}{r-\ell}=\binom{r}{\ell}=0,\quad\text{ for }\quad r+1 \le \ell \le ik+r\quad\text{ when }\quad i\geq1.\label{eqitemBprime05}
 			\end{equation}
 			Then, by identities \eqref{eqitemBprime04}, \eqref{eqitemBprime05} and Lemma~\ref{lemaproperpsi}, item ($5$), we arrive at
 			\begin{small}
 				\begin{align*}
 					H_{b,(r+1)k+r}&=\sum_{i=0}^{b} (-1)^{i k+r} 2^{(b-i)k-b-i} \sum_{\ell=0}^{ik+r} \binom{r}{\ell}\psi(bk-b-i,ik+r-\ell-1)\\
 					&=\sum_{i=0}^{b} (-1)^{i k+r} 2^{(b-i)k-b-i} \psi(bk-b-i+r,ik+r-1).
 				\end{align*} 				
 			\end{small}
 			The above identity verifies identity~(\ref{lemaitemiveqqe00}) in this case, and we finish the base step of induction on $ j $ and on $ r $.
 			
 		\item We assume by the inductive hypothesis that identity~(\ref{lemaitemiveqqe00}) is fulfilled for $ r-1 $ and up to $ j-1 $; i.e.,
 			\begin{equation}
 				H_{b,\ell k+r-1}=\sum_{i=0}^{b} (-1)^{i k+r-1} 2^{(b-i)k+\ell-r-b-i} \psi(bk+\ell-b-i-1,ik+r-2)\label{eqitemBprime06}
 			\end{equation}
 			for $ r \le \ell \le  j-1 $. Also, by item $ (A') $ above, we obtain that identity~\eqref{eqitemBprime06} holds for $0 \le \ell \le r-1 $. In fact, it implies that 
 			\begin{equation*}
 				H_{b,r}=\sum_{i=0}^{b} (-1)^{i k+r} 2^{(b-i)k-r-b-i-1} \psi(bk-b-i-1,ik+r-1).\label{eqitemBprime07}
 			\end{equation*} 			
 			Then, by Lemma~\ref{lemaproperty}, item ($II$) and identity~\eqref{eqitemBprime06}, we arrive at
 			\begin{align}
 				H_{b,jk+r}&=2^{j}H_{b,r}-\sum_{\ell=0}^{j-1}2^{j-1-\ell}H_{b,\ell k+r-1}\nonumber\\
 				&=\sum_{i=0}^{b} (-1)^{i k+r} 2^{(b-i)k+j-r-b-i-1} \psi(bk-b-i-1,ik+r-1)\nonumber\\
 				&+\sum_{i=0}^{b} (-1)^{i k+r} 2^{(b-i)k+j-r-b-i-1} \sum_{\ell=0}^{j-1}\psi(bk+\ell-b-i-1,ik+r-2)\label{eqitemBprime08}
 			\end{align}
 		for all $r, j\in[1,k-1]$\footnote{The case $ r=0 $ is made in the first item of this item~$ (B') $.}. We observe, by Lemma~\eqref{lemaproperpsi}, item ($1$), that
 			\begin{align}
 			&\psi(bk-b-i-1,ik+r-1)+\sum_{\ell=0}^{j-1}\psi(bk+\ell-b-i-1,ik+r-2)\nonumber\\
 			&=\psi(bk-b-i,ik+r-1)+\sum_{\ell=1}^{j-1}\psi(bk+\ell-b-i-1,ik+r-2)\nonumber
 		\end{align}
    	\begin{align}
 			&=\psi(bk-b-i+1,ik+r-1)+\sum_{\ell=2}^{j-1}\psi(bk+\ell-b-i-1,ik+r-2)\nonumber\\
 			&~\vdots\nonumber\\
 			&=\psi(bk-b-i+j-2,ik+r-1)+\psi(bk+j-b-i-2,ik+r-2)\nonumber\\
 			&=\psi(bk-b-i+j-1,ik+r-1).\label{eqitemBprime09}
 		\end{align}
 		Then, by the identities \eqref{eqitemBprime08} and \eqref{eqitemBprime09}, we obtain
 		\begin{align*}
 			H_{b,jk+r}&=\sum_{i=0}^{b} (-1)^{i k+r} 2^{(b-i)k+j-r-b-i-1}\Bigg(\psi(bk-b-i-1,ik+r-1)\\
 			&+\left.\sum_{\ell=0}^{j-1}\psi(bk+\ell-b-i-1,ik+r-2)\right)\\
 			&=\sum_{i=0}^{b} (-1)^{i k+r} 2^{(b-i)k+j-r-b-i-1} \psi(bk-b-i+j-1,ik+r-1)
 		\end{align*}
 		and identity~\eqref{lemaitemiveqqe00} is satisfied.  This finishes the induction on both $ j $ and $ r $ thus completing the proof of this lemma.
 	\end{itemize}
	\qedhere
	\end{enumerate}
 
\end{proof}

Now we return to the proof of item $ (iv) $. Replacing $ b(k-1)+j $ by $ m $ and $ r-1 $ by $ r $ in Lemma~\ref{lemaitemiv}, we obtain that 
\begin{align}
	H_{mk+r}&=H_{(b(k-1)+j)k+r}=H_{b,jk+r+1}\label{eqcaseivs220}\\
			&=\sum_{i=0}^{b} (-1)^{i k+r+1} \psi(m-i-1,ik+r) 2^{m-i(k+1)-r-2},\label{eqcaseivs22}
\end{align}
for all $ k>2 $, $ j\in[0,k-1] $, $ r\in[-1,k-2] $ and $ b\in\mathbb{Z}^{+} $. By identity~\eqref{roweqrow1}, we have
\begin{equation*}
	H_{b,r+1}=H_{b-1,(k-1)k+r+1}\quad\text{for all } ~-1\leq r\leq k-2 \quad\text{and }~ b\in\mathbb{Z}^{+},
\end{equation*}
so we can assume that $ j\in\left[0,k-1\right) $ (in order not to repeat elements of the sequence in \eqref{eqcaseivs220}). Then
\[
	\frac{m}{k-1}-1<\frac{m-j}{k-1}\leq\frac{m}{k-1} \quad\text{for }\quad k-1> 1,
\]
therefore $ \left\lfloor \frac{m}{k-1}\right\rfloor -1 < b \leq \left\lfloor \frac{m}{k-1}\right\rfloor $ and $ b= \left\lfloor \frac{m}{k-1}\right\rfloor $. In conclusion, item $ (iv) $ follows from replacing $ b $ by $ \left\lfloor \frac{m}{k-1}\right\rfloor $ in identity~\eqref{eqcaseivs22}\footnote{Note that $ m=b(k-1)+j\geq k-1 $.}.\\

This completes the proof of Theorem \ref{MT}.

%%%%%%%%%%%%%%%%%%%%%%%%%%%%%%%%%%%%%%%%%%%%%%%%%%%%%%%%%%%%%%%%%%%%%%%%%%%%%%%%%%
\section{Proof of Conjecture \ref{conjectute}}
%%%%%%%%%%%%%%%%%%%%%%%%%%%%%%%%%%%%%%%%%%%%%%%%%%%%%%%%%%%%%%%%%%%%%%%%%%%%%%%%%%

We distinguish two cases according to the parity of $k$. Furthermore, since item $(i)$ of Theorem \ref{MT} determines the set of zeros of $H^{(k)}$ in \eqref{zeromultcont} and items $(ii)$--$(iii)$ don't provide zeros for $H^{(k)}$, we can assume that $m \ge k-1$.     

\subsection{Case  $k$ even}
In this case, by item $(iv) $ in Theorem \ref{MT}, we get that for all $ r\in[-1,k-2] $ and $ m\geq k-1 $, 
\begin{align*}
	H_{mk+r}&=(-1)^{r+1}\sum_{i=0}^{l} \left[\binom{m-i-1}{ik+r}+\binom{m-i}{ik+r+1}\right]2^{m-i(k+1)-r-2},
\end{align*}
where $ l=m-1 $ if $ k=2 $, and $ l = \lfloor m/(k-1) \rfloor $ if $ k>2 $. So, for $ n = mk + r\geq k^2-k-1 $, we have $ H_{n}< 0 $ if $ n $ is even, while $H_{n} > 0 $ if $ n $ is odd, given that $n$ and $r$ have same parity because $k$ is even. 

Thus, item $(i) $ of Theorem \ref{MT} implies that in this case the zero--multiplicity  of $ H^{(k)}$ is exactly $ k (k-1) / 2 $, confirming Conjecture \ref{conjectute} in this case.

%%%%%%%%%%%%%%%%%%%%%%%%%%
\subsection{Case  $k$ odd} 
%%%%%%%%%%%%%%%%%%%%%%%%%%

Since we verified in our previous work \cite{zero-multiplicity} that $\#{\mathcal Z}({H^{(k)}})= k(k-1)/2$ for $k \in [4,500]$, we will assume that $ H_{n} = 0$, for $n=mk+r$, with $m \ge k-1, r\in [-1,k-2]$ and $k > 500$ odd. 

%%%%%%%%%%%%%%%%%%%%%%%%%%%%%%%%%%%%%%%%%%%%%%%%%%%
\subsubsection{A lower bound for $n$ in terms of $k$} 
%%%%%%%%%%%%%%%%%%%%%%%%%%%%%%%%%%%%%%%%%%%%%%%%%%%

By item $(iv) $ of Theorem~\ref{MT}, we have after simplifying a factor of $2^{m-r-2}$, that  
\begin{equation}
	\sum_{i=0}^{l} (-1)^{i}\psi(m-i-1,ik+r)2^{-i(k+1)}=0, \qquad {for} ~~ l=\left\lfloor m/(k-1)\right\rfloor.\label{eqsumzero}
\end{equation}
We note that in the sum of equation~\eqref{eqsumzero}, the term for $ i=0 $ is non--zero since $ r\leq k-2<k-1\leq m $, which implies that $ r\leq m-1 $ and therefore $ \psi(m-1,r)\neq 0 $. If the case $ i=0 $ were the only for which $\psi(m-i-1,ik+r)$ is non--zero, we would have that equation~\eqref{eqsumzero} has no solution and our problem of zero multiplicity for the odd $ k $ case would be solved. So, we must assume that at least two terms of the above sum (the first with $ i=0 $ and the second with $ i $ odd) are non--zero.

Next, we take $ l'$ to be the largest index $ i>0 $ for which the $ i $-th term of the sum in equation~\eqref{eqsumzero} is non--zero and see that equation~\eqref{eqsumzero} takes the form 
\[
	\sum_{i=0}^{l'} (-1)^{i}\psi(m-i-1,ik+r)2^{-i(k+1)}=0, \qquad {\rm for} ~~ 0< l'\leq \left\lfloor m/(k-1)\right\rfloor,
\]
where $ \psi(m-l'-1,l'k+r) $ is non--zero.

Separating the case $i=l'$, we get
\[
\psi(m-l'-1,l'k+r) = 2^{k+1}\sum_{i=0}^{l'-1} (-1)^{l'+i}\psi(m-i-1,ik+r)2^{(l'-i-1)(k+1)},
\]
with $ l'-i-1\geq0 $ for all $ i=0,\ldots,l'-1 $.
Hence,
$$
2^{k+1}\mid \psi(m - l' - 1, l'k+r),\qquad {\text{\rm where}}\qquad l'\leq \lfloor m/(k-1)\rfloor.
$$ 
Now, 
$$
\psi(v,w)=\binom{v}{w}+\binom{v+1}{w+1}=\binom{v}{w}\left(1+\frac{v+1}{w+1}\right).
$$
Kummer \cite{Kummer} proved that $\nu_2\left(\binom{v}{w}\right)$ equals the number of carries when adding $w$ with $v-w$ in base $2$. Here, for a nonzero integer $m$, $\nu_2(m)$ is the exponent of $2$ in the factorization of $m$. 
In particular,
$$
\nu_2\left(\binom{v}{w}\right)\le \frac{\log v}{\log 2}+1.
$$
Hence, 
\begin{eqnarray*}
	\nu_2 (\psi(v,w))  \le \nu_2\left(\binom{v}{w}\right)+\nu_2(v+w+2) &\le& \frac{\log v}{\log 2}+1+\frac{\log(v+w+2)}{\log 2}\\
	& \le & \frac{2\log v}{\log 2}+2,
\end{eqnarray*}
where for the last inequality we used the fact that we may assume that $w\le v-2$ (indeed, if $w=v$, then $\psi(v,w)=2$, so the above bound holds while if $w=v-1$, then $\psi(v,w)=2v+1$ is odd so the above bound again holds).
Since 
$$
2^{k+1}\mid \psi(v,w), \quad {\rm for} ~~ (v,w) = (m-l'-1, l'k+r), 
$$
we get $\nu_2(\psi(v,w))\ge k+1$. Hence, 
$$
k+1\le \frac{2\log v}{\log 2}+2,\quad {\text{\rm therefore}}\quad v\ge 2^{(k-1)/2}.
$$
Since $m-l'-1\le n$, we get that
\begin{eqnarray}\label{lowerbound n-k}
2^{(k-1)/2}\le n.%\le 2.4\cdot 10^{17} \cdot k^{21.6} (\log k)^3 (\pi/e)^k.
\end{eqnarray}

%%%%%%%%%%%%%%%%%%%%%%%%%%%%%%%%%%%%%%%%%%%%%%%%%%%%
\subsubsection{An upper bound for $n$ in terms of $k$} 
%%%%%%%%%%%%%%%%%%%%%%%%%%%%%%%%%%%%%%%%%%%%%%%%%%%%

We now review the preliminary work \cite{zero-multiplicity} on zero--multiplicity of $H^{(k)}$, to obtain a better upper bound for $n$ on $k$, which we then combine with the above lower bound \eqref{lowerbound n-k}. 
In \cite{zero-multiplicity}, we used a Binet--type formula of $H_n$, namely 
$$
H_n = f_{k}(\alpha_{1})\alpha_{1}^{-(n+1)} + \cdots + f_{k}(\alpha_{k-1})\alpha_{k-1}^{-(n+1)} + f_{k}(\alpha_{k})\alpha_{k}^{-(n+1)}
$$
where $\alpha_{1},\ldots,\alpha_{k-1},\alpha_{k}$ are all the roots of the characteristic polynomial of $F^{(k)}$, with $\A_1>1$ being the only real root and the remaining $k-1$ roots lie inside the unit disk. Furthermore, 
$$
\alpha_1 > |\alpha_{2}|\geq\cdots\geq|\alpha_{k-1}|\geq|\alpha_{k}| \quad {\rm and} \quad f_k(z) := (z-1)/\left(2+(k+1)(z-2)\right).
$$ 
Thus, if $H_{n}=0$, then 
\begin{eqnarray}\label{upperbound}
|\Lambda| := \left|1+\left(\frac{f_{k}(\alpha_{k-1})}{f_{k}(\alpha_{k})}\right)\left(\frac{\alpha_{k}}{\alpha_{k-1}}\right)^{n+1}\right| 
         & = &\sum_{i=1}^{k-2}\left|\frac{f_{k}(\alpha_{i})}{f_{k}(\alpha_{k})}\right|\left|\frac{1}{\alpha_{i}/\alpha_{k}}\right|^{n+1} \nonumber\\ 
	     & < & \frac{3(k-2)/(k-1)}{\left|\alpha_{k-2}/\alpha_{k}\right|^{n+1}}\left|\frac{1}{f_{k}(\alpha_{k})}\right|\label{upperbound-1}\\
	     & < &\frac{13k(3k+1)}{\left|\alpha_{k-2}/\alpha_{k}\right|^{n+1}}.\label{upperbound-2}\nonumber
\end{eqnarray}
Using an argument involving lower bounds for nonzero linear forms in logarithms of complex algebraic numbers, we found 
\begin{eqnarray}\label{lowerboundMat}
|\Lambda| >  \exp(-5 \cdot 10^{13} \times k^7\log (n+1)(\log{k})^2).
\end{eqnarray}
The following result will be fundamental to combine inequalities \eqref{upperbound} and \eqref{lowerboundMat}, in order to find a better upper bound for $n$ on $k$ that one given in \cite{zero-multiplicity}.
The next lemma is Theorem 1 in \cite{GGL-preprint} and represents an improvement over the analogous inequality in \cite{Dubickas}. 
\begin{lema}\label{lm |ai|/|aj|}
The inequality
\begin{equation*}
\frac{|\alpha_i|}{|\alpha_j|}>1+\frac{1}{10k^{9.6} (\pi/e)^k}\quad {\text{\rm holds~for~all}}\quad 1\le i<j\le (k-1)/2
\end{equation*}
and all $k\ge 4$.
\end{lema}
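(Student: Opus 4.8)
The plan is to pass to the auxiliary polynomial $(z-1)\Psi_k(z)=z^{k+1}-2z^k+1$ (an immediate telescoping, since the coefficients of $F^{(k)}$ are all $1$), so that the $k-1$ roots of $\Psi_k$ lying in the open unit disk are exactly the non-unit roots of $g(z):=z^{k+1}-2z^k+1$, each satisfying $\alpha^{k}(2-\alpha)=1$. Writing such a root as $\alpha=\rho e^{i\theta}$ with $0<\rho<1$, I would take absolute values and arguments to obtain
\[
\rho^{k}=\bigl(4-4\rho\cos\theta+\rho^{2}\bigr)^{-1/2}
\qquad\text{and}\qquad
k\theta\equiv-\arg(2-\alpha)\pmod{2\pi}.
\]
Since $\operatorname{Re}(2-\alpha)>1>|\operatorname{Im}(\alpha)|$ whenever $|\alpha|<1$, we have $|\arg(2-\alpha)|<\pi/4$, so the argument equation shows that the roots of $g$ in the open upper half-plane are naturally indexed by $\ell\in\{1,\dots,(k-1)/2\}$ with $\theta=\theta_\ell=2\pi\ell/k+O(1/k)$, that $\theta_{\ell+1}-\theta_\ell\in\bigl(3\pi/(2k),\,5\pi/(2k)\bigr)$, and that $\pi-\theta_\ell\gg 1/k$ for $\ell\le(k-1)/2$. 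Identifying the $\alpha_i,\alpha_j$ of the statement with roots of arguments $\theta_i<\theta_j$ (the case in which one of them is the dominant real root $>1$ being trivial), it suffices to separate $|\alpha_i|=\rho(\theta_i)$ from $|\alpha_j|=\rho(\theta_j)$, where $\theta_j-\theta_i\ge 3\pi(j-i)/(2k)$.

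First I would check that for fixed $\theta\in(0,\pi)$ the map $\rho\mapsto\rho^{k}\bigl(4-4\rho\cos\theta+\rho^{2}\bigr)^{1/2}$ increases on $(0,1)$ from $0$ to $\sqrt{5-4\cos\theta}>1$, so that $\rho=\rho(\theta)$ is uniquely defined, smooth, and $=1+O(1/k)$ uniformly in $\theta$. Implicitly differentiating $k\log\rho=-\tfrac12\log\bigl(4-4\rho\cos\theta+\rho^{2}\bigr)$ gives
\[
\frac{\rho'(\theta)}{\rho(\theta)}=\frac{-2\rho\sin\theta}{k(4-4\rho\cos\theta+\rho^{2})-2\rho\cos\theta+\rho^{2}}<0\qquad(0<\theta<\pi),
\]
which re-establishes the modulus ordering among these roots and, after substituting $\rho=1+O(1/k)$, becomes $\rho'/\rho=-\tfrac{1}{2k}\cdot\tfrac{d}{d\theta}\log(5-4\cos\theta)\cdot(1+O(1/k))$. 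Integrating from $\theta_i$ to $\theta_j$,
\[
\log\frac{|\alpha_i|}{|\alpha_j|}=-\int_{\theta_i}^{\theta_j}\frac{\rho'}{\rho}\,d\theta=\frac{1+O(1/k)}{2k}\Bigl(\log(5-4\cos\theta_j)-\log(5-4\cos\theta_i)\Bigr).
\]
Since $\tfrac{d}{d\theta}\log(5-4\cos\theta)=\tfrac{4\sin\theta}{5-4\cos\theta}\ge\tfrac{4}{9}\sin\theta$ and $\sin\theta\gg1/k$ throughout $[\theta_i,\theta_j]$ (because $\theta_i$ and $\pi-\theta_j$ are both $\gg1/k$), this gives
\[
\log\frac{|\alpha_i|}{|\alpha_j|}\ \ge\ \frac{1+O(1/k)}{2k}\cdot(\theta_j-\theta_i)\cdot\frac{c}{k}\ \gg\ \frac{1}{k^{3}}
\]
for an absolute constant $c>0$. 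As $(\pi/e)^{k}$ grows exponentially, $1/k^{3}$ dominates $1/(10k^{9.6}(\pi/e)^{k})$ once $k$ is large, so the asserted inequality follows a fortiori for all sufficiently large $k$; the remaining bounded range of $k\ge4$ — where $1\le i<j\le(k-1)/2$ allows only finitely many pairs — is verified numerically.

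The hard part is not the qualitative geometry but making the $O(\cdot)$ estimates uniform in both $k$ and $\ell$, and this is presumably where an argument modelled on \cite{Dubickas} and refined in \cite{GGL-preprint} instead takes a lossy route and accepts the weaker exponential-type bound. Concretely one must: (i) show the equation $k\theta=2\pi\ell-\arg\!\bigl(2-\rho(\theta)e^{i\theta}\bigr)$ has a unique solution $\theta_\ell$ in each window, with the claimed control on $\theta_\ell-2\pi\ell/k$, via a monotonicity/fixed-point argument using $|\arg(2-\alpha)|<\pi/4$ and a Lipschitz bound for $\theta\mapsto\arg(2-\rho(\theta)e^{i\theta})$; (ii) make $\rho(\theta)=1-\tfrac{1}{2k}\log(5-4\cos\theta)+O(1/k^{2})$ uniform in $\theta$ up to $\theta=\pi-\Theta(1/k)$; and (iii) control the degeneration of $\tfrac{4\sin\theta}{5-4\cos\theta}$ near $\theta=0$ and $\theta=\pi$, which is what forces the tightest relative gap to occur between $\alpha_{(k-3)/2}$ and $\alpha_{(k-1)/2}$ and to be compared against $\pi-\theta_{(k-1)/2}\gg1/k$. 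A more robust alternative that directly produces a bound of the stated shape is algebraic: $\Psi_k$ is irreducible with Mahler measure $\alpha_1<2$, so every root has Weil height $<(\log 2)/k$; hence $\gamma:=\alpha_i\overline{\alpha_i}/(\alpha_j\overline{\alpha_j})=|\alpha_i|^{2}/|\alpha_j|^{2}$ is algebraic of small height and is $\neq1$ (which still needs the analytic input to exclude equal moduli of non-conjugate roots), so a Liouville-type lower bound for $|\gamma-1|$ together with $|\alpha_i|/|\alpha_j|-1\ge\tfrac12|\gamma-1|$ yields a bound of the form $1/(c_1k^{c_2}(\pi/e)^{k})$ — the real difficulty there being to keep the degree of $\gamma$ essentially linear in $k$ rather than the crude quadratic, which would give only $c^{-k^{2}}$.
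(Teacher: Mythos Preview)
The paper does not prove this lemma at all: immediately before the statement it says ``The next lemma is Theorem~1 in \cite{GGL-preprint},'' and no argument is given. So there is nothing in the present paper to compare your attempt against; the authors simply import the result from their companion preprint.

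What you have written is not a proof but an outline, and you are candid about this (``The plan is to\ldots'', ``The hard part is\ldots''). The outline is analytically reasonable: the passage to $g(z)=z^{k+1}-2z^k+1$, the parametrisation $\alpha=\rho e^{i\theta}$, the modulus and argument equations, the monotonicity of $\rho\mapsto\rho^k\sqrt{4-4\rho\cos\theta+\rho^2}$ on $(0,1)$ (your derivative check goes through since $D=|2-\alpha|^2>(2-|\alpha|)^2>1$), and the implicit-differentiation formula for $\rho'/\rho$ are all correct. If the uniform estimates you list under (i)--(iii) can be established, your integral argument indeed yields $\log(|\alpha_i|/|\alpha_j|)\gg 1/k^3$, which is dramatically stronger than the stated bound with its factor $(\pi/e)^k$ in the denominator. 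That discrepancy is itself a hint: the cited preprint presumably takes an algebraic separation route (Mahler measure, Liouville-type bounds on $|\gamma-1|$) of the kind you sketch at the end, which naturally produces an exponential denominator, rather than the sharper analytic route you propose first.

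The genuine gap is that the three uniformity claims (i)--(iii) are asserted, not proved, and they are exactly where the work lies. In particular: showing that $\rho(\theta)=1-\tfrac{1}{2k}\log(5-4\cos\theta)+O(1/k^2)$ holds \emph{uniformly} up to $\theta=\pi-\Theta(1/k)$ requires controlling the second-order term when $5-4\cos\theta$ is near $9$; and the fixed-point argument for $\theta_\ell$ needs an explicit Lipschitz constant for $\theta\mapsto\arg(2-\rho(\theta)e^{i\theta})$ that is strictly less than $k$. None of this is implausible, but none of it is done. As written, your proposal is a credible research programme toward a sharper theorem, not a proof of the lemma; and since the paper itself offers no proof, there is no local standard to meet or diverge from.
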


Hence, by inequalities \eqref{upperbound}, \eqref{lowerboundMat} and Lemma \ref{lm |ai|/|aj|} 
\begin{eqnarray}\label{upperbound n-k}
n < 3\cdot 10^{14} k^{17.6} (\log k)^3 (\pi/e)^k.
\end{eqnarray}

%%%%%%%%%%%%%%%%%%%%%%%%%%%%%%%%%%%%%%%%%%%%%%%%%%%%%%%%%%%%%%%%
\subsubsection{Absolute bounds for $n, k$ and final conclusion} 
%%%%%%%%%%%%%%%%%%%%%%%%%%%%%%%%%%%%%%%%%%%%%%%%%%%%%%%%%%%%%%%%

Combining \eqref{lowerbound n-k} and \eqref{upperbound n-k}, we get
$$
\left(\frac{{\sqrt{2}}}{(\pi/e)}\right)^{k}<3\sqrt{2}\cdot 10^{14} k^{17.6} (\log k)^3,
$$
showing that $k \le 790$. 

By \eqref{upperbound-1} and \eqref{lowerboundMat}, we have
\begin{equation}\label{des n-k}
5 \cdot 10^{13} \times k^7\log (n+1)(\log{k})^2 > \log\left|\cfrac{f_{k}(\alpha_{k})}{3}\right|+(n+1)\log\left|\cfrac{\alpha_{k-2}}{\alpha_{k}}\right|.
\end{equation}
Thus, using the fact that $k \in [501,789]$, we obtain computationally an upper bound on $n$ in each case:
\begin{equation*}\label{firstinequality}
{\rm If} ~~ H_{n} = 0 \quad {\rm and} \quad k\in[501, 789], \quad {\rm then}\quad n \in 2.5\cdot10^{45}.
\end{equation*}

Returning to inequality \eqref{lowerbound n-k}, we have $2^{(k-1)/2} < n < 2.5\cdot10^{45}$ which leads to $k \le 517$. Finally, we return once again to \eqref{des n-k} where now we get $n < 3.5\cdot10^{43}$ for all odd $k \in[501,517]$. Then, by \eqref{lowerbound n-k}, we conclude that $k<500$, contradicting our initial assumption about $k$.

This completes the proof of Conjecture \ref{conjectute} for the case of $k$ odd.

%%%%%%%%%%%%%%%%%%%%%%%%%%%%%%%%%%%%%%%%%%%%%%%%%%%%%%%%%%%%%%%%%%%%%%%%%%%%%%%%%%
\textbf{Acknowledgement.}
%%%%%%%%%%%%%%%%%%%%%%%%%%%%%%%%%%%%%%%%%%%%%%%%%%%%%%%%%%%%%%%%%%%%%%%%%%%%%%%%%%
J. G. thanks the Universidad del Valle for support during his master's studies.
C.A.G. was supported in part by Project 71327 (Universidad del Valle).
%%%%%%%%%%%%%%%%%%%%%%%%%%%%%%%%%%%%%%%%%%%%%%%%%%%%%%%%%%%%%%%%%%%%%%%%%%%%%%%%%%

%\bibliographystyle{siam}
%\bibliography{Bibliok-fib_no_positiva}

\begin{thebibliography}{99}
	
\bibitem{Cooper-Howard} C.  Cooper and F.~ T. Howard, Some identities for $r$--Fibonacci numbers, {\it The Fibonacci Quarterly} {\bf 49} (2011) 231--243.
	
\bibitem{Dubickas} A. Dubickas, On then distance between two algebraic numbers, {\it Bulletin of the Malaysian Mathematical Science Society\/} {\bf 43} (2020), 3049--3064.
	
\bibitem{Ferguson66} D. E. Ferguson, An expression for generalized Fibonacci numbers, {\it The Fibonacci Quarterly} {\bf 4} (1966), 270--273.
	
\bibitem{Gabai} H. Gabai, Generalized Fibonacci $k$--sequences, {\it The Fibonacci Quarterly} {\bf 8} (1970), 31--38.
	
\bibitem{zero-multiplicity} J. García, C. A. Gómez, and F. Luca, On the zero--multiplicity of the $k$--generalized Fibonacci sequence, {\it Journal of Difference Equations and Applications} {\bf 26} (2020),  1564--1578.

\bibitem{GGL-preprint} J. García, C. A. Gómez and F. Luca, On the separation of the roots of the generalized Fibonacci polynomial, {\it arXiv:2210.16717v1} (2022), 1--15.
	
\bibitem{Hagedorn} T. R. Hagedorn, Zeros of a real linear recurrence of degree $n \ge 4$, {\it C. R. Math. Rep. Acad. Sci. Canada} {\bf 27} (2005), 41--47.
	
\bibitem{Koshy} T. Koshy, Fibonacci and Lucas Numbers with Applications, {\it John Wiley \& Sons, Inc.}, New York, 2001.
	
\bibitem{Kummer} E. Kummer, Uber die Erganzungssatze zu den allgemeinen Reciprocitatsgesetzen, {\it J. Reine Angew. Math.} {\bf 44} (1852), 93--146.
	
\bibitem{T.N.Shorey2008} T.N. Shorey and R. Tijdeman, Exponential Diophantine equations, {\it Cambridge University Press}, 1986; reprinted 2008.
	
\bibitem{Skolem} T. Skolem, Ein Verfahren zur Behandlung gewisser exponentialer Gleichungen und diophantischer Gleichungen, {\it Proceedings of the 8th
Congress of Scandinavian Mathematicians}, Stockholm (1934), 163--188.

\bibitem{Young22} P. T. Young, On the 2--adic valuation of generalized Fibonacci sequences, {\it Integers} {\bf 22} (2022), Article 68.
	
\end{thebibliography}
%%%%%%%%%%%%%%%%%%%%%%%%%%%%%%%%%%%%%%%%%%%%%%%%%%%%%%%%%%%%%%%%%%%%%%%%%%%%%%%%%%
\end{document}